\theoremstyle{definition}
\newtheorem{mydef}{Definition}[section]
\newtheorem{lem}[mydef]{Lemma}
\newtheorem{thm}[mydef]{Theorem}
\newtheorem{cor}[mydef]{Corollary}
\newtheorem{prop}[mydef]{Proposition}
\newtheorem{defin}[mydef]{Definition}
\newtheorem{example}[mydef]{Example}
\newtheorem{remark}[mydef]{Remark}
\newtheorem{rem}[mydef]{Remark}
\newtheorem{notation}[mydef]{Notation}
\newtheorem{fact}[mydef]{Fact}
\newcommand{\fct}[2]{{}^{#1}#2}
\newcommand{\pullbackcorner}[1][dl]{\save*!/#1-1pc/#1:(1,-1)@^{|-}\restore}
\newcommand\Po{\operatorname{Po}}
\newcommand\Tc{\operatorname{Tc}}
\newcommand\Rt{\operatorname{Rt}}
\newcommand\Iso{\operatorname{Iso}}
\newcommand\id{\operatorname{id}}
\newcommand{\Rmod}{R\text{-}\operatorname{\bf {Mod}}}
\newcommand\cof{\operatorname{cof}}
\newcommand\cell{\operatorname{cell}}
\newcommand\colim{\operatorname{colim}}
\newcommand\cb{\mathcal {B}}
\newcommand\cc{\mathcal {C}}
\newcommand{\Cc}{\mathcal{C}}
\newcommand\ck{\mathcal {K}}
\newcommand\cl{\mathcal {L}}
\newcommand\cm{\mathcal {M}}
\newcommand\cx{\mathcal {X}}
\newcommand{\coker}{\operatorname{coker}}
\newcommand{\cf}[1]{\text{cf} (#1)}
\newcommand{\seq}[1]{\langle #1 \rangle}
\newbox\noforkbox \newdimen\forklinewidth
\noforkbox\hbox{\lower 2pt\box1\lower
2pt\box0\relax}
\def\unionstick{\mathop{\copy\noforkbox}\limits}
\newcommand{\nf}{\unionstick}
\newcommand{\smallnf}{\downarrow}
\newcommand{\nfs}[4]{#2 \nf_{#1}^{#4} #3}
\def\1nf{\unionstick^{(1)}}
\def\2nf{\unionstick^{(2)}}
\def\3nf{\unionstick^{(3)}}
\newcommand{\NF}{\operatorname{NF}}
\newcommand{\Mm}{\mathcal{M}}
\newcommand{\Xx}{\mathcal{X}}
\title{Cellular categories and stable independence}
\date{\today \\
AMS 2010 Subject Classification: Primary: 18C35. Secondary:  	03C45, 03C48, 03C52, 03C55, 16B50, 55U35.}
\keywords{cellular categories; forking; stable independence; abstract elementary class; cofibrantly generated; roots of Ext}
\author[Lieberman]{Michael Lieberman}
\email{lieberman@math.muni.cz}
\urladdr{http://www.math.muni.cz/\textasciitilde lieberman/}
\address{Institute of Mathematics, Faculty of Mechanical Engineering, Brno University of Technology, Brno, Czech Republic}
\author[Rosick\'y]{Ji\v r\'i Rosick\'y}
\email{rosicky@math.muni.cz}
\urladdr{http://www.math.muni.cz/\textasciitilde rosicky/}
\address{Department of Mathematics and Statistics, Faculty of Science, Masaryk University, Brno, Czech Republic}
\thanks{The second author is supported by the Grant agency of the Czech Republic under the grant 19-00902S}
\author[Vasey]{Sebastien Vasey}
\begin{document}

\begin{abstract}
  We exhibit a bridge between the theory of \emph{cellular categories}, used in algebraic topology and homological algebra, and the model-theoretic notion of \emph{stable independence}. Roughly speaking, we show that the combinatorial cellular categories (those where, in a precise sense, the cellular morphisms are generated by a set) are exactly those that give rise to stable independence notions. We give two applications: on the one hand, we show that the abstract elementary classes of roots of Ext studied by Baldwin-Eklof-Trlifaj are stable and tame. On the other hand, we give a simpler proof (in a special case) that combinatorial categories are closed under 2-limits, a theorem of Makkai and Rosick\' y.
\end{abstract}

\maketitle


\section{Introduction}

Stable (nonforking) independence is a central notion of model theory. In the first-order context, it was introduced by Shelah in \cite{shelahfobook}, and constitutes an essential tool both in that book and in decades of subsequent work in first-order model theory.  In the now-dominant anchor notation introduced by Makkai, \cite{makkai-survey}, this is rendered as a relation on quadruples of sets, $\nfs{A}{B}{C}{D}$, understood to mean that $B$ and $C$ are independent---in a precise syntactic sense---over $A$ in $D$.  Generalizing (and serving many of the same purposes as) linear or algebraic independence, this independence notion can also be directly axiomatized as the canonical quaternary relation satisfying a list of essential properties, including, but not limited to invariance, monotonicity, uniqueness, and local character.  This notion was subsequently extended---again in syntactic form---to abstract elementary classes in
\cite{shclassaecs}, with corresponding direct axiomatization presented in \cite{bgkv-apal}.  In earlier work of the authors, \cite{indep-categ-advances}, it was shown that the latter axiomatization, in particular, leads naturally to the formulation of nonforking in an abstract category as a calculus of special commutative squares,
$$  
    \xymatrix@=3pc{
      C \ar[r]\ar@{}[dr]|\nf & D \\
      A \ar [u] \ar[r] & B \ar[u]
    }
    $$
identified as "independent."  That this specializes to stable independence in abstract elementary classes is proven in \cite[\S 8]{indep-categ-advances}; connections with the classical notion are examined in, e.g. \cite[5.7(6)]{vasinvite}.  It is shown in \cite{indep-categ-advances}, moreover, that this axiomatization is canonical in accessible categories with monomorphisms, assuming that they have chain bounds, thereby extending the canonicity theorem for abstract elementary classes of \cite{bgkv-apal}. The present paper hinges on a pair of observations implicit in the earlier one.  First, the aforementioned commutative squares can be thought as a replacement for pushouts in situations where they---an often indispensable tool in category-theoretic constructions---are not available: for example, when all morphisms are monomorphisms, which is typical for abstract elementary classes.  Second, we need not, in fact, assume that all morphisms are monomorphisms: we conclude in an appendix, for example, that canonicity holds in still greater generality, in arbitrary accessible categories with chain bounds.  More broadly, this shifts stable independence away from the model-theoretic framework, and means that the benefits of the first observation are applicable across the broad swathe of category theory.  The chief aim of the present paper is to highlight one particularly fruitful connection that arises as a result, between stable independence and the important homotopy-theoretic concepts of cellular and cofibrant generation.

Cellular categories were introduced in \cite{mr} as cocomplete categories equipped with a class of morphisms (called cellular)
containing all isomorphisms and closed under pushouts and transfinite compositions. These categories are abundant in homotopy theory
because any Quillen model category carries two cellular structures given by cofibrations and trivial cofibrations respectively. These cellular
categories are, in addition, retract-closed (in the category of morphisms). A retract-closed cellular category is cofibrantly
generated if it is generated by a set of morphisms using pushouts, transfinite compositions and retracts. In locally presentable categories, this implies that cellular morphisms form a left part of a weak factorization system. In \cite{mr}, retract-closed cofibrantly generated cellular locally presentable categories were called \emph{combinatorial}. The main result of \cite{mr} is that combinatorial
categories are closed under 2-limits, in particular under pseudopullbacks. A consequence is that combinatorial categories are left-induced in a sense that, given a colimit preserving functor $F:\ck\to\cl$ from a locally presentable category $\ck$ to a combinatorial
category $\cl$ then preimages of cellular morphisms form a combinatorial structure on $\ck$. This was later used, e.g., in \cite{induced-model-jtop}. The proof is quite delicate and depends on Lurie's concept of a good colimit (see \cite{fat-small-obj}).

The main result of the present paper is that, in the special case when cellular morphisms are coherent and $\aleph_0$-continuous, a retract-closed cellular category is combinatorial if and only if it carries a stable independence notion (Theorem \ref{cofib-gen}).     Independent squares coincide with \emph{cellular} squares; that is, squares of cellular morphisms such that the unique morphism from the pushout is cellular. These squares are also used in \cite{henry-induced}. Since a pre-image of an accessible category is accessible, this yields a simple proof that coherent and $\aleph_0$-continuous combinatorial categories are left-induced (see Corollary \ref{lazy}). While coherence is quite common, especially for trivial cofibrations, $\aleph_0$-continuity is more limiting. Nevertheless, our
theorem covers many situations. In particular, we will show (Theorem \ref{ext-thm}) that the abstract elementary classes of ``roots of Ext'' studied in \cite{bet} (for example the AEC of flat modules with flat monomorphisms) have a stable independence notion. Note, too, that since pure monomorphisms in a locally finitely presentable category are coherent and $\aleph_0$-continuous, the result of \cite{lprv-purecofgen-v3}, the proof of which relies on \cite{mr}, actually falls within the framework of this paper.

In an appendix, we prove a strengthening of the canonicity result of \cite{indep-categ-advances}: if a category $\ck$ has chain bounds, it has at most one weakly stable independence relation (in fact, we prove something stronger still, cf. Theorem~\ref{canon-thm}).  This new canonicity theorem eliminates the requirement---present in \cite{indep-categ-advances}---that all morphisms in $\ck$ are monomorphisms, and is central to several of the results of this paper.  Both Theorem~\ref{canon-thm} and its proof should be of independent interest: in connection with the latter, we show that it is possible to define, and to work with, independent sequences in an abstract category; that is, without reference to elements.

Concerning terminology, we will refer freely to \cite{adamek-rosicky}, \cite{mr} and \cite{indep-categ-advances} (concerning accessible
categories, cellular categories, and stable independence respectively). A more comprehensive version of the present paper, with added background, can be found at \url{https://arxiv.org/abs/1904.05691v2}.

\subsection{Acknowledgments}

We thank Jan Trlifaj for helpful conversations about roots of Ext and Simon Henry for sharing \cite{henry-induced} with us. We are also indebted to John Baldwin, Marcos Mazari-Armida, Misha Gavrilovich, and the anonymous referee for useful feedback. 

\section{Cellular categories}
Recall that a cocomplete category $\ck$ is called \textit{cellular} if it is equipped with a class $\cm$ of morphisms containing all isomorphisms and closed under pushouts and transfinite compositions (see \cite{mr}).

\begin{rem}\label{subcat}
A composition of two morphisms is a special case of a transfinite composition. Thus a cellular category $(\ck,\cm)$ induces
a subcategory $\ck_\cm$ of the category $\ck$ whose objects are those in $\ck$ and whose morphisms are precisely those of $\cm$.
Since $\cm$ contains all isomorphisms, the subcategory $\ck_\cm$ is \textit{isomorphism-closed}. Still, $\ck_\cm$ need not have pushouts.

In order to explain this, recall that $\cm$ is closed under pushouts whenever, given a pushout square
  $$  
    \xymatrix@=3pc{
      C \ar[r]^h & P \\
      A \ar [u]^g \ar[r]_f & B \ar[u]
    }
    $$
in $\ck$ with $f\in\cm$, then $h\in\cm$. But this does not mean that, if also $g\in\cm$, that this square is a pushout square
in $\ck_\cm$. The latter means that given another commutative square in $\ck_\cm$, as below, with $u,v\in \cm$,
   $$  
  \xymatrix@=3pc{
    & & D \\    
    B \ar[r]\ar@/^/[rru]^{u} & P \ar[ru]_t \pullbackcorner & \\
    A \ar [u]^g \ar[r]_f & C \ar[u]\ar@/_/[ruu]_{v} &
  }
  $$
then the induced morphism $t$ is in $\cm$. 

Similarly, although $\cm$ is closed under transfinite compositions, these composition does not to be colimits in $\ck_\cm$.
In the latter case, $\ck_\cm$ would be closed under colimits of smooth chains, which implies closure under all directed
colimits (see \cite[1.7]{adamek-rosicky}).
\end{rem}

\begin{defin}\label{cellular-square}
Let $(\ck,\cm)$ be a cellular category. A commutative square
$$  
    \xymatrix@=3pc{
      C \ar[r]^u & D \\
      A \ar [u]^g \ar[r]_f & B \ar[u]_v
    }
    $$
is called \emph{cellular} if the induced morphism $t:P\to D$ from the pushout (see above) belongs to $\cm$   .
\end{defin}

\begin{rem}
Cellular squares could also be called $\cm$-effective. In the special case in which $\cm$ is the class of regular monomorphisms, this corresponds precisely to the effective squares considered in \cite{indep-categ-advances}, and originating in \cite{effective-unions}.
\end{rem}

\begin{defin}\label{coherent-def}
A cellular category $(\ck,\cm)$ will be called
\begin{enumerate}
\item \emph{coherent} if whenever $f$ and $g$ are composable morphisms, $gf \in \Mm$ and $g \in \Mm$, then $f \in \Mm$,
\item \emph{left cancellable} if $gf \in \Mm$ implies $f \in \Mm$,
\item \emph{$\lambda$-continuous} if $\ck_{\Mm}$ is closed under $\lambda$-directed colimits in $\ck$,
\item \emph{$\lambda$-accessible} it is $\lambda$-continuous and both $\ck$ and $\ck_{\Mm}$ are $\lambda$-accessible.
\item \emph{accessible} if it is $\lambda$-accessible for some $\lambda$.
\end{enumerate}
\end{defin}

\begin{rem}\label{coherent-prop} \
  \begin{enumerate}
  \item Since a cellular category is cocomplete, an accessible cellular category has $\ck$ locally presentable.
  \item\label{coherent-prop-2} It is easy to see that $(\ck,\cm)$ is $\lambda$-continuous provided that $\cm$ is closed under $\lambda$-directed colimits 
in $\ck^2$. In fact, given a $\lambda$-directed diagram $D:I\to \ck_\cm$ and its colimit $\delta_i:Di\to K$ in $\ck$, then
$\delta_i=\colim_{i\leq j\in I}D_{i,j}$, where the $D_{i,j}:Di\to Dj$ are the appropriate diagram maps. Similarly, given a cocone $\gamma_i:Di\to L$
in $\ck_\cm$ then the induced morphism $g:K\to L$ is precisely $\colim_i\gamma_i$.
\end{enumerate}
\end{rem}

\begin{rem}\label{independence} \
  \begin{enumerate}
  \item In \cite{indep-categ-advances}, we defined an \emph{independence relation} (or \emph{independence notion}) in a category $\ck$ 
as a class $\nf$ of commutative square (called \emph{$\nf$-independent}, or just \emph{independent}, squares) such that, for any commutative diagram

    $$
    \xymatrix@=3pc{
     & & E \\
    B \ar[r]\ar@/^/[rru] & D \ar[ru] & \\
    A \ar [u] \ar[r] & C \ar[u]\ar@/_/[ruu] &
    }
    $$
    
{\noindent}the square spanning $A, B, C,$ and $D$ is independent if and only if the square spanning $A, B, C,$ and $E$ is independent.
A subcategory of $\ck^2$ (the category of morphisms in $\ck$, whose objects are morphisms and morphisms are commutative squares)
whose objects are morphisms and morphisms are independent squares was denoted as $\ck_{\NF}$. Here, we will denote it as 
$\ck_\downarrow$.

\item In \cite{indep-categ-advances}, as independence relation $\nf$ was defined to be \emph{stable} if it is symmetric, transitive, accessible, has existence, and has uniqueness.
In case $\nf$ satisfies all of the above conditions except accessibility, we say that it is \emph{weakly stable}. 

\item Accessibility of $\nf$ means that the category 
$\ck_\downarrow$ is accessible, which implies, in particular, that it is closed in $\ck^2$ under $\lambda$-directed colimits for some $\lambda$
(see \cite[3.26]{indep-categ-advances}). If $\nf$ satisfies the latter closure condition, we say that it is $\lambda$-\emph{continuous}.

\item\label{independence-3}  Accessibility of $\nf$ also implies that $\ck$ is accessible (see \cite[3.27]{indep-categ-advances}).

  \end{enumerate}
\end{rem}


\begin{thm}\label{weak-stable-thm}
  If $(\ck,\Mm)$ is a cellular category, then cellular squares form a weakly stable independence relation in $\ck_{\Mm}$.
\end{thm}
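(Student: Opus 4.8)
The plan is to verify directly the conditions from Remark~\ref{independence} that make a class of commutative squares a weakly stable independence relation in $\ck_\cm$: namely, that cellular squares form an independence relation which is symmetric and transitive and has existence and uniqueness. Throughout, the ambient category is $\ck_\cm$, while the pushouts of Definition~\ref{cellular-square} are computed in the cocomplete category $\ck$. The organizing observation is that cellularity of a square with all edges in $\cm$ depends only on the induced morphism $t\colon P\to D$ out of the pushout $P$ of its lower-left span, and that each construction witnessing an axiom below is itself a pushout; hence the two defining closure properties of a cellular structure — under pushouts and under (transfinite, in particular ordinary) compositions — do all the work.

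First, the easy conditions. Cellularity is invariant under isomorphisms of squares, since an isomorphism of squares preserves pushouts; in a diagram as in Remark~\ref{independence}(1) the canonical morphisms $P\to D$ and $P\to E$ differ by post-composition with $D\to E$, so closure of $\cm$ under composition makes cellularity stable under enlarging the apex, and cellular squares thus form an independence relation. Symmetry is immediate: $P$ is simultaneously $B\sqcup_A C$ and $C\sqcup_A B$, with the same induced morphism to $D$. For existence, given a span $f\colon A\to B$, $g\colon A\to C$ in $\ck_\cm$, the pushout square is cellular since its induced morphism $P\to P$ is the identity, and its legs lie in $\cm$ by closure under pushouts; likewise, for a morphism $f\colon A\to B$ of $\ck_\cm$ the degenerate square with lower span $(f,\id_A)$ and apex $B$ is cellular, its pushout being $B$ with induced morphism $\id_B$.

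The main step is transitivity. Suppose $A\to B$ factors in $\ck_\cm$ as $A\xrightarrow{f_1} M\xrightarrow{f_2} B$; that $(M\to D_1,\ C\to D_1)$ is a cellular cocone over $(f_1,g)$, so the canonical morphism $\tau_1\colon P_1\to D_1$ lies in $\cm$ with $P_1=M\sqcup_A C$; and that $(B\to D_2,\ D_1\to D_2)$ is a cellular cocone over $(f_2,\ M\to D_1)$, so the canonical morphism $\tau_2\colon P_2\to D_2$ lies in $\cm$ with $P_2=B\sqcup_M D_1$. I want the induced cocone over $(f_2 f_1,\ g)$ to be cellular, i.e.\ the canonical morphism $\tau\colon P\to D_2$ to lie in $\cm$, where $P=B\sqcup_A C$. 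By pushout-pasting $P\cong B\sqcup_M P_1$, and then $P\sqcup_{P_1}D_1\cong B\sqcup_M D_1=P_2$; the resulting comparison $\rho\colon P\to P_2$ is a pushout of $\tau_1$, so lies in $\cm$, and a short chase through the relevant universal properties identifies $\tau$ with $\tau_2\circ\rho$, hence $\tau\in\cm$. The edges of the outer square lie in $\cm$ by closure under composition, so we stay inside $\ck_\cm$.

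For uniqueness, given two cellular cocones $(B\to D,\ C\to D)$ and $(B\to D',\ C\to D')$ over a span $(f,g)$, with canonical morphisms $t\colon P\to D$ and $t'\colon P\to D'$ in $\cm$, form $D''=D\sqcup_P D'$. Then $D\to D''$ and $D'\to D''$ are pushouts of $t'$ and $t$ respectively and so lie in $\cm$; the two cocones become equal in $D''$ since the two coprojections agree on $P$; and the resulting square over $(f,g)$ with apex $D''$ is cellular, because its canonical morphism $P\to D''$ is the composite of $t$ with $D\to D''$, a composite in $\cm$. This is exactly the amalgam demanded by uniqueness. I expect the only real difficulty to be bookkeeping — getting the pushout-pasting isomorphisms in the transitivity step aligned correctly and checking throughout that the morphisms produced remain in $\ck_\cm$ — rather than anything conceptually deep beyond the universal property of pushouts and the two closure properties of a cellular category.
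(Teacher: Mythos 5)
Your verifications of symmetry, existence, uniqueness, and transitivity are correct and essentially identical to the paper's: your transitivity argument via pasting of pushouts ($P\cong B\sqcup_M P_1$, then $P\sqcup_{P_1}D_1\cong P_2$, so the comparison $\rho$ is a pushout of $\tau_1$ and $\tau=\tau_2\rho$) is the paper's argument written in transposed form, and your uniqueness amalgam $D\sqcup_P D'$ is exactly the paper's.

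The gap is in the very first step, the claim that cellular squares form an independence notion at all. By Remark~\ref{independence}(1) this is a biconditional: given $D\to E$ in $\cm$, the square $(A,B,C,D)$ must be cellular \emph{if and only if} $(A,B,C,E)$ is. You prove only the forward direction (``stable under enlarging the apex''), which indeed follows from closure of $\cm$ under composition. The converse --- knowing that the canonical map $P\to E$ lies in $\cm$ and that $D\to E$ lies in $\cm$, conclude that $P\to D$ lies in $\cm$ --- does not follow from the cellular axioms (closure under isomorphisms, pushouts, and transfinite compositions); it is precisely an instance of \emph{coherence} in the sense of Definition~\ref{coherent-def}(1). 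The paper's proof invokes coherence at exactly this point, and the remark following the proof states that this is the only place where coherence (or, alternatively, cocoherence) is needed. As written, your argument establishes everything except the defining closure property of an independence relation; to close the gap you must either assume coherence (as the paper's proof does) or give an argument for the ``shrink the apex'' direction, and no such argument is available from the bare cellular structure.
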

\begin{proof}
  We first check that cellular squares form an independence notion. Assume that $(A, B, C, D)$ is a commutative square\footnote{We occasionally economize by not explicitly naming the morphisms involved when there is no danger of confusion.} in $\ck_{\Mm}$ and we are given a morphism $D \to E$ in $\Mm$. If $(A, B, C, D)$ is cellular, then closure of $\Mm$ under composition yields that 
  $(A, B, C, E)$ is cellular. Conversely, if $(A, B, C, E)$ is cellular, then the map $P \to E$ from the pushout is in $\Mm$ by assumption, and also $D \to E$ is in $\Mm$, so by coherence also the map $P \to D$ is in $\Mm$. Thus $(A, B, C, D)$ is cellular.

  This concludes the proof that cellular squares form an independence notion. Of course, the relation is also symmetric. Existence follows from closure under pushouts (and the fact that the identity map is an isomorphism, hence in $\Mm$). In order to prove the uniqueness property, consider cellular squares $(A, B, C, D^1)$ and $(A,B,C,D^2)$ with the same span $B \leftarrow A\to C$. Form the pushout 
$$
\xymatrix@=3pc{
B \ar[r]^{} & P \\
A \ar [u]^{} \ar [r]_{} &
C \ar[u]_{}
}
$$
and take the induced morphisms $P\to D^1$ and $P\to D^2$. They are in $\Mm$ by cellularity. Then the pushout

$$
\xymatrix@=3pc{
D^1 \ar[r]^{} & D \\
P \ar [u]^{} \ar [r]_{} &
D^2 \ar[u]_{}
}
$$

amalgamates the starting diagram.

To prove transitivity, consider:
  
$$  
  \xymatrix@=3pc{
    B \ar[r]^{} & D \ar[r]^{}  & F \\
    A \ar [u]^{f} \ar [r]^{g} & C \ar[u]_{} \ar[r]^{g'} & E \ar[u]_{}
  }
  $$
where both squares are cellular. We have to show that the outer rectangle is cellular. Thus we have to show that the induced morphism $p:P\to F$ from the pushout
$$
\xymatrix@=3pc{
B \ar[r]^{} & P \pullbackcorner\\
A \ar [u]^{f} \ar [r]^{g'g} &
E \ar[u]_{}
}
$$  
is in $\Mm$. This pushout is a composition of pushouts
$$  
  \xymatrix@=3pc{
    B \ar[r]^{u} & Q \ar[r]^{u'}\pullbackcorner  & P \pullbackcorner\\
    A \ar [u]^{f} \ar [r]^{g} & C \ar[u]_{v} \ar[r]^{g'} & E \ar[u]_{v'}
  }
  $$
Recalling the left square of the starting diagram, we have an induced morphism $q:Q\to D$.  Consider the pushout
 $$
\xymatrix@=3pc{
D \ar[r]^{} & P' \\
Q \ar [u]^{q} \ar [r]^{u'} &
P \ar[u]_{\bar{q}}
}
$$  
Since the left square of the starting diagram is cellular, $q$ is in $\Mm$ and thus $\bar{q}$ is in $\Mm$. Composing this pushout with the right pushout square in the diagram above it, we obtain the pushout
$$
\xymatrix@=3pc{
D \ar[r]^{} & P' \\
C \ar [u]^{} \ar [r]_{} &
E \ar[u]_{}
}
$$ 
The right square in the starting diagram is cellular, so the induced morphism $p':P'\to F$ is in $\Mm$. Thus $p=p'\bar{q}$ is in $\Mm$.
\end{proof}
  
\begin{remark}
In the proof, we have not used the full strength of the assumption that $\cm$ is closed under transfinite compositions: here finite compositions suffice.
Coherence is used only once, in the proof that cellular squares form an independence notion (specifically, in the proof that the top right corner can be made ``smaller''). Instead of coherence, we could also have assumed the dual property, cocoherence: indeed, we know in the proof that the maps $C \to D$ and $C \to P$ are in $\Mm$, so cocoherence would give us immediately that $P \to D$ is in $\Mm$. Note, however, that if $\Mm$ is a class of monomorphisms, cocoherence is too strong an assumption: if a section $i: A \to B$ is in $\Mm$, cocoherence would imply that the corresponding retract $r: B \to A$ is in $\Mm$, and so $r$ would have to be an isomorphism.
\end{remark}

\begin{notation}
For a cellular category $(\ck,\Mm)$, we write $\ck_{\Mm,\smallnf}$ for $\left(\ck_{\Mm}\right)_{\smallnf}$.
\end{notation}

\begin{remark}
In a cellular category, cellular squares form a class of morphisms in $\ck^2$. Following Theorem \ref{weak-stable-thm} this class
is closed under composition, by transitivity of the associated weakly stable independence notion. Using \cite[3.18]{indep-categ-advances}, it is isomorphism-closed. Using
\cite[3.20, 3.21]{indep-categ-advances}, cellular squares are left-cancellable.  
\end{remark}

\begin{lem}\label{dir-colim-lem}
If $(\ck,\Mm)$ is a $\lambda$-continuous cellular category, then the independence relation given by cellular squares is $\lambda$-continuous.
\end{lem}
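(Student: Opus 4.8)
The plan is to unwind the definitions and reduce everything to a single ``pushout commutes with $\lambda$-directed colimits'' computation. In the ambient category $\ck_\Mm$, $\lambda$-continuity of the independence relation given by cellular squares means, by Remark~\ref{independence}, precisely that $\ck_{\Mm,\smallnf}$ is closed under $\lambda$-directed colimits inside $(\ck_\Mm)^2$. Since $(\ck,\Mm)$ is $\lambda$-continuous and $\ck$ is cocomplete, $\ck_\Mm$ has $\lambda$-directed colimits and the inclusion $\ck_\Mm\hookrightarrow\ck$ preserves them; hence $(\ck_\Mm)^2$ has $\lambda$-directed colimits, computed pointwise, and $(\ck_\Mm)^2\hookrightarrow\ck^2$ preserves them. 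So I would fix a $\lambda$-directed diagram $(X_i)_{i\in I}$ in $\ck_{\Mm,\smallnf}$, write $X_i=(g_i\colon A_i\to C_i)$ with connecting morphisms the cellular squares $(a_{ij}\colon A_i\to A_j,\ c_{ij}\colon C_i\to C_j)$ (so $a_{ij},c_{ij}\in\Mm$), and let $X=(g\colon A\to C)$ be the colimit in $(\ck_\Mm)^2$, with colimit cocone the squares $\mu_i=(\alpha_i,\gamma_i)\colon X_i\to X$. It then remains to check (a) that each $\mu_i$ is a cellular square, and (b) that $X$, so equipped, is a colimit of $(X_i)$ in $\ck_{\Mm,\smallnf}$ and not merely in $(\ck_\Mm)^2$.

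For (a), I would fix $i$ and pass to the cofinal $\lambda$-directed set $\{j:j\ge i\}$. Put $P_{ij}=C_i\sqcup_{A_i}A_j$ with induced map $t_{ij}\colon P_{ij}\to C_j$; cellularity of $X_i\to X_j$ says exactly that $t_{ij}\in\Mm$. Since pushouts commute with $\lambda$-directed colimits (colimits commute with colimits) and $\{j:j\ge i\}$ is cofinal, one gets $C_i\sqcup_{A_i}A=\colim_{j\ge i}P_{ij}$; moreover the connecting map $P_{ij}\to P_{ik}$ of this diagram is the pushout of $a_{jk}\colon A_j\to A_k$ along $A_j\to P_{ij}$, hence lies in $\Mm$ by closure of $\Mm$ under pushouts. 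Thus $(P_{ij})_{j\ge i}$ is a $\lambda$-directed diagram in $\ck_\Mm$ with colimit $C_i\sqcup_{A_i}A$, and $(\gamma_j t_{ij}\colon P_{ij}\to C)_{j\ge i}$ is a cocone on it lying entirely in $\Mm$ (as $t_{ij}\in\Mm$, and $\gamma_j\in\Mm$ because $(C_j)_{j\ge i}$ is a diagram in $\ck_\Mm$ and colimits there agree with those in $\ck$). By the observation in Remark~\ref{coherent-prop} — a cocone in $\ck_\Mm$ over a $\lambda$-directed diagram induces a morphism lying in $\ck_\Mm$ — the comparison map $C_i\sqcup_{A_i}A\to C$ therefore belongs to $\Mm$. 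A short check that this comparison map has the same composite with each colimit injection $P_{ij}\to C_i\sqcup_{A_i}A$ as the map $t$ of Definition~\ref{cellular-square} associated to $\mu_i$ identifies the two, so $\mu_i$ is cellular.

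For (b), given a further cocone $(\nu_i\colon X_i\to Y)_{i\in I}$ in $\ck_{\Mm,\smallnf}$, with $Y=(h\colon B\to D)$ and induced $(\ck_\Mm)^2$-morphism $f=(\phi\colon A\to B,\ \psi\colon C\to D)$, I would show that $f$ is cellular by the same device: $C\sqcup_A B=\colim_i(C_i\sqcup_{A_i}B)$, the connecting maps of the latter diagram are pushouts of the $\Mm$-maps $t_{ij}\colon P_{ij}\to C_j$ (using the factorizations $A_i\to A_j\to B$ of the legs of $\nu$) and hence lie in $\Mm$, the maps $C_i\sqcup_{A_i}B\to D$ lie in $\Mm$ by cellularity of $\nu_i$, and so the comparison map $C\sqcup_A B\to D$ lies in $\Mm$ exactly as in (a), i.e. $f$ is cellular. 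Together (a) and (b) give the conclusion. I expect the main obstacle to be organizational rather than conceptual: the point one must not lose sight of is that $C_j$ is not the pushout $P_{ij}$ but only receives an $\Mm$-morphism $t_{ij}$ from it, and it is this fact, together with stability of $\Mm$ under pushouts, that places the relevant diagrams of pushouts inside $\ck_\Mm$, where the hypothesis of $\lambda$-continuity of $(\ck,\Mm)$ can finally be brought to bear; the only slightly delicate bookkeeping is the identification of the various comparison maps as colimits of the $t_{ij}$ (resp.\ of the maps $C_i\sqcup_{A_i}B\to D$), which is a routine check that both sides have the same composite with each colimit injection.
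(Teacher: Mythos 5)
Your proposal is correct and follows essentially the same route as the paper: identify the pushout $C_i\sqcup_{A_i}A$ as the $\lambda$-directed colimit of the pushouts $C_i\sqcup_{A_i}A_j$, observe that the induced comparison map into $C$ is a $\lambda$-directed colimit of the maps $t_{ij}\in\Mm$, and invoke $\lambda$-continuity; the universal-property check is handled the same way. The only difference is that you spell out in detail the verification that the cocone is a colimit cocone in $\ck_{\Mm,\smallnf}$ (and that the connecting maps between the pushouts lie in $\Mm$), which the paper dispatches with ``similarly, one can check.''
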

\begin{proof}
Let $(\ck,\Mm)$ be $\lambda$-continuous. Let $D:I\to \ck_{\Mm, \smallnf}$ be a $\lambda$-directed diagram where $Di$ is $f_i:A_i\to B_i$. Let $f:A\to B$ be a colimit of $D$ in $\left(\ck_{\Mm}\right)^2$. For each $i\in I$, the pushout of the colimit coprojection $A_i\to A$ along $f_i$, i.e.
$$
\xymatrix@=3pc{
A \ar[r]^{g} & P \\
A_i \ar [u]^{} \ar [r]_{f_i} &
B_i \ar[u]_{}
}
$$
is a $\lambda$-directed colimit of pushouts
$$
\xymatrix@=3pc{
A_{i'} \ar[r]^{g_{i'}} & P_{i'} \\
A_i \ar [u]^{} \ar [r]_{f_i} &
B_i \ar[u]_{}
}
$$
Thus the induced morphism $p:P\to B$ is a $\lambda$-directed colimit of induced morphisms $p_{i'}:P_{i'}\to B_{i'}$. Since $\Mm$ is 
$\lambda$-continuous, it follows that $p \in \Mm$. This shows that all the maps of the cocone $(f_i \to g)_{i \in I}$ are independent squares. Similarly, one can check that this is a colimit cocone in $\ck_{\Mm, \smallnf}$. Thus $\ck_{\Mm, \smallnf}$ is closed under 
$\lambda$-directed colimits in $\left(\ck_{\Mm}\right)^2$.  
\end{proof}

\section{Combinatorial categories}
A cellular category $(\ck,\cm)$ is said to be \emph{retract-closed} if $\cm$ is closed under retracts in the category $\ck^2$. A retract-closed
cellular category is called \emph{combinatorial} if it is \emph{cofibrantly generated}, i.e., if $\cm$ is the closure of a set $\cx$ of morphisms under pushouts, transfinite compositions and retracts. In particular, $\cm=\cof(\cx)$, where
$$
\cof(\cx)=\Rt(\Tc(\Po(\cx)))=\Rt(\cell(\cx))
$$
where $\Po$ denotes the closure under pushouts, $\Tc$ under transfinite compositions and $\Rt$ under retracts (see \cite{mr}).

For $\lambda$ a regular cardinal, we write $\ck_\lambda$ for the full subcategory of $\ck$ consisting of $\lambda$-presentable objects. We similarly denote by $\ck_\lambda^2$ the full subcategory of $\ck^2$ consisting of morphisms with $\lambda$-presentable domains and codomains. We will also write, for example, $\Mm_\lambda := \Mm \cap \ck_\lambda^2$.

The next result, the main theorem of this paper, characterizes when cellular squares form a stable independence notion in terms of cofibrant generation of the corresponding class of morphisms.  

To go from stable independence to cofibrant generation, we require a technical result from \cite[\S9]{internal-improved-v2} concerning the existence of \emph{filtrations}. Recall that the \emph{presentability rank} of an object $A$ is the least regular cardinal $\lambda$ such that $A$ is $\lambda$-presentable. We say that $A$ is \emph{filtrable} if it can be written as the directed colimit of a chain of objects with lower presentability rank than $A$. We say that $A$ is \emph{almost filtrable} if it is a retract of such a chain. The chain is \emph{smooth} if directed colimits are taken at every limit ordinal. By \cite[9.12]{internal-improved-v2}, in any accessible category with directed colimits, there exists a regular cardinal $\lambda$ such that any object with presentability rank at least $\lambda$ is almost filtrable (and, moreover, the chain in the filtration can be chosen to be smooth). We say that a category satisfying the latter condition is \emph{almost well $\lambda$-filtrable}.

\begin{thm}[Main theorem]\label{cofib-gen}
Let $(\ck,\cm)$ be an accessible cellular category which is retract-closed, coherent and $\aleph_0$-continuous. The following are equivalent:

  \begin{enumerate}
  \item\label{cofib-gen-1} $\ck_{\Mm}$ has a stable independence notion.
  \item\label{cofib-gen-2} Cellular squares form a stable independence notion in $\ck_{\Mm}$.
  \item\label{cofib-gen-3} $(\ck,\Mm)$ is combinatorial.
  \end{enumerate}
\end{thm}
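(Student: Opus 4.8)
The plan is to prove the cycle $\eqref{cofib-gen-3}\Rightarrow\eqref{cofib-gen-2}\Rightarrow\eqref{cofib-gen-1}\Rightarrow\eqref{cofib-gen-3}$, since the implication $\eqref{cofib-gen-2}\Rightarrow\eqref{cofib-gen-1}$ is trivial. By Theorem~\ref{weak-stable-thm}, cellular squares already form a \emph{weakly} stable independence relation in $\ck_\Mm$, so in the first two implications the only remaining point is \emph{accessibility} of the relation $\smallnf$, i.e.\ accessibility of the category $\ck_{\Mm,\smallnf}$; its closure under $\lambda$-directed colimits for suitable $\lambda$ is already handled by Lemma~\ref{dir-colim-lem} once we know $\Mm$ is $\lambda$-continuous (which, by cofibrant generation, it is for $\lambda$ large enough---cellular classes are closed under $\lambda$-directed colimits in $\ck^2$ when $\lambda$ bounds the presentability ranks of the generating set $\cx$ and of the domains/codomains of its members, cf.~\cite{mr}; combined with Remark~\ref{coherent-prop}\eqref{coherent-prop-2}). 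So the core of $\eqref{cofib-gen-3}\Rightarrow\eqref{cofib-gen-2}$ is to exhibit a dense subcategory of small presentable objects in $\ck_{\Mm,\smallnf}$ and to verify the requisite colimit conditions; here I would take, for large regular $\lambda$, the cellular squares between $\lambda$-presentable objects of $\ck$, show every cellular square is a $\lambda$-directed colimit (in $\ck^2$, hence in $\ck_{\Mm,\smallnf}$ by the $\lambda$-continuity just discussed) of such small cellular squares---using that $\Mm=\cof(\cx)$ lets one present any cellular morphism as a small object's worth of cellular data---and conclude accessibility of $\ck_{\Mm,\smallnf}$ from the accessibility of $\ck_\Mm$ and $\ck$ via the standard criterion (a full subcategory of an accessible category closed under $\lambda$-directed colimits and with a dense set of $\lambda$-presentables is accessible).

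For $\eqref{cofib-gen-1}\Rightarrow\eqref{cofib-gen-3}$, I would first invoke the \emph{canonicity} machinery: by Theorem~\ref{weak-stable-thm}, cellular squares form a weakly stable independence relation on $\ck_\Mm$, and by hypothesis $\ck_\Mm$ carries \emph{some} stable, in particular weakly stable, independence relation; the new canonicity theorem of the appendix (Theorem~\ref{canon-thm}) -- applied once we check $\ck_\Mm$ has chain bounds, which follows from $\aleph_0$-continuity together with cocompleteness of $\ck$ -- forces the two to coincide. Hence the given stable independence relation on $\ck_\Mm$ \emph{is} the cellular-squares relation, and in particular cellular squares form a stable independence notion; this already gives $\eqref{cofib-gen-1}\Rightarrow\eqref{cofib-gen-2}$, and what remains is to extract cofibrant generation of $\Mm$ from stability of $\smallnf$.

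The real work -- and the main obstacle -- is that last extraction, building a generating set $\cx$ with $\Mm=\cof(\cx)$ out of the accessibility of $\ck_{\Mm,\smallnf}$. The idea is to let $\cx$ be (a skeleton of) $\Mm_\lambda=\Mm\cap\ck_\lambda^2$ for a sufficiently large regular $\lambda$, chosen so that: $\ck$, $\ck_\Mm$ and $\ck_{\Mm,\smallnf}$ are all $\lambda$-accessible; the relation $\smallnf$ is $\lambda$-continuous; and, crucially, $\ck$ is almost well $\lambda$-filtrable in the sense recalled before the theorem, using \cite[9.12]{internal-improved-v2}. Then, given an arbitrary $f\in\Mm$, one filters its codomain as a smooth chain of objects of strictly smaller presentability rank (up to a retract), pulls this back to a presentation of $f$ as a transfinite composite of pushouts of morphisms of strictly smaller ``size,'' and induces on presentability rank -- at each successor stage the relevant building-block morphism lies in $\cof(\cx)$ by the induction hypothesis, and the transitivity/continuity of stable independence (i.e.\ that cellular squares are closed under composition and $\lambda$-directed colimits, hence under transfinite composition) together with closure under pushouts glues the pieces back together; the retract at the end is absorbed by retract-closedness. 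Making the induction on presentability rank actually close -- in particular handling objects of small rank (the base case, where $\cx$ itself must suffice) and verifying that the filtration of the codomain can be lifted to a compatible filtration of the \emph{morphism} $f$ through cellular squares -- is the delicate point, and is exactly where $\aleph_0$-continuity, coherence (to move the ``top corner'' around, as in Theorem~\ref{weak-stable-thm}), and the filtrability result \cite[9.12]{internal-improved-v2} all get used together.
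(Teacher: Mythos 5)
Your proposal follows essentially the same route as the paper's proof: canonicity (Theorem~\ref{canon-thm}) combined with Theorem~\ref{weak-stable-thm} to identify any stable independence notion on $\ck_\Mm$ with cellular squares, the almost-well-filtrability result of \cite[9.12]{internal-improved-v2} together with an induction on presentability rank to extract cofibrant generation of $\Mm$ from accessibility of $\ck_{\Mm,\smallnf}$, and, for the converse, a presentation of every cellular morphism as a $\lambda$-directed colimit \emph{in} $\ck_{\Mm,\smallnf}$ of morphisms in $\Mm_\lambda$. The differences are cosmetic---you arrange the three implications as a single cycle rather than the paper's $(1)\Rightarrow(2)\Rightarrow(3)\Rightarrow(1)$, and the delicate points you flag (lifting the filtration of a morphism to a transfinite composition of pushouts of small cellular maps, and checking that the diagram exhibiting a cellular morphism as a colimit of small ones actually lives in $\ck_{\Mm,\smallnf}$) are precisely the constructions the paper carries out in detail.
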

\begin{proof}  
(\ref{cofib-gen-1}) implies (\ref{cofib-gen-2}): If $\ck_{\Mm}$ has a stable independence notion, then canonicity (Theorem \ref{canon-thm} -- note that $\ck_{\Mm}$ has directed colimits, since $\Mm$ is $\aleph_0$-continuous) together with Theorem \ref{weak-stable-thm} ensures that it is given by cellular squares. Note that if we know that all morphisms in $\Mm$ are monos, then we do not need Theorem \ref{canon-thm} and can use \cite[9.1]{indep-categ-advances} instead.
  
(\ref{cofib-gen-2}) implies (\ref{cofib-gen-3}): Assume that $\ck_{\Mm}$ has a stable independence $\nf$ given by cellular squares. Thus $\ck_{\Mm, \smallnf}$ is accessible and has directed colimits (by Lemma \ref{dir-colim-lem}). By Remark \ref{independence}(\ref{independence-3}), 
$\ck_{\Mm}$ is accessible, so $\Mm$ is accessible. Using the preceding discussion, pick a regular uncountable cardinal $\lambda$ such both $\ck$ and $\ck_{\Mm, \smallnf}$ are $\lambda$-accessible and almost well $\lambda$-filtrable. Let $\Mm_\lambda$ be the collection of morphisms in $\Mm$ whose domains and codomains are $\lambda$-presentable (in $\ck$). We will show that for each infinite cardinal 
$\mu$, $\Mm_{\mu^+} \subseteq \cof (\Mm_\lambda)$. We proceed by induction on $\mu$. When 
$\mu < \lambda$, this is trivial, so assume that $\mu \ge \lambda$. Note that, playing with pushouts, it is straightforward to check that the $\mu^+$-presentable objects in $\ck_{\Mm, \smallnf}$ are exactly the morphisms of $\Mm_{\mu^+}$. 

Every morphism $h$ in $\Mm_{\mu^+}$ must be a retract of a filtrable object in $\ck_{\Mm, \smallnf}$. Now, retracts in $\ck_{\Mm, \smallnf}$ are retracts in $\ck^2$, so since we are looking at $\cof (\Mm_\lambda)$ it suffices to show that any morphism $h$ in $\Mm_{\mu^+}$ which \emph{is} filtrable in $\ck_{\Mm, \smallnf}$ is in $\cof (\Mm_\lambda)$. So take such a morphism. Write $h = h_0:K_0\to L$. We will show that  $h_0\in\cof (\Mm_\lambda)$. Express $h_0$ as a colimit of a smooth chain of morphisms $t_{0i}\in\cof (\Mm_\lambda)$, $i<\cf{\mu}$, between $(<\mu^+)$-presentable objects in $\ck_{\Mm, \smallnf}$.
$$  
      \xymatrix@=3pc{
        K_0 \ar@{}\ar[r]^{h_0} & L \\
        K_{0i} \ar [u]^{k_{0i}} \ar [r]_{t_{0i}} &
        L_{0i} \ar[u]_{l_{0i}}
      }
      $$
 
Form a pushout
$$  
      \xymatrix@=3pc{
        K_0 \ar@{}\ar[r]^{h_{01}} & K_1 \\
        K_{00} \ar [u]^{k_{00}} \ar [r]_{t_{00}} &
        L_{00} \ar[u]_{\bar{k}_{00}}
      }
      $$
and take the induced morphism $h_1:K_1\to L$. Since the starting square is cellular, $h_1$ is in $\Mm$. Note also that $K_1$ is $\mu^+$-presentable. We have a commutative square
$$  
      \xymatrix@=3pc{
        K_1 \ar@{}\ar[r]^{h_1} & L \\
        K_{01} \ar [u]^{h_{01}k_{01}} \ar [r]_{t_{01}} &
        L_{01} \ar[u]_{l_{01}}
      }
      $$
because $h_1 h_{01} k_{01} = h_0 k_{01} = l_{01} t_{01}$. We can express $h_1$ as a colimit of a smooth chain of morphisms $t_{1i}\in\cof (\Mm_\lambda)$, $1\leq i<\cf{\mu}$, between $<\mu^+$-presentable objects in $\ck_{\Mm, \smallnf}$ which are above $t_{01}$
      
$$
\xymatrix@C=3pc@R=3pc{
K_1 \ar [r]^{h_1}  & L \\
K_{1i} \ar[r]^{t_{1i}} \ar [u]^{k_{1i}}  & L_{1i} \ar [u]_{l_{1i}}\\
K_{01} \ar [r]_{t_{01}} \ar [u]^{} & L_{01} \ar [u]_{}
}
$$
 
Form a pushout
$$  
      \xymatrix@=3pc{
        K_1 \ar@{}\ar[r]^{h_{12}} & K_2 \\
        K_{11} \ar [u]^{k_{11}} \ar [r]_{t_{11}} &
        L_{11} \ar[u]_{\bar{k}_{11}}
      }
      $$
and take the induced morphisms $h_2:K_2\to L$. Again, by cellularity, $h_2$ is in $\Mm$. In
$$
K_0 \xrightarrow{\ h_{01}\ } K_1\xrightarrow{\ h_{12}\ } K_2\xrightarrow{\ h_2} L
$$
we put $h_{02}=h_{12}h_{01}$ and continue transfinitely. This means that for $i<\cf{\mu}$ we express
$h_i$ as a colimit of a smooth chain of morphisms $t_{ij}\in\cof (\Mm_\lambda)$, $i\leq j<\cf{\mu}$, between $(<\mu^+)$-presentable objects in $\ck_{\Mm, \smallnf}$ which are above $t_{0i}$

$$
\xymatrix@C=3pc@R=3pc{
K_i \ar [r]^{h_i}  & L \\
K_{i+1,j} \ar[r]^{t_{i+1,j}} \ar [u]^{k_{i+1,j}}  & L_{i+1,j} \ar [u]_{l_{i+1,j}}\\
K_{0i} \ar [r]_{t_{0i}} \ar [u]^{} & L_{0i} \ar [u]_{}
}
$$

Form a pushout
$$  
      \xymatrix@=3pc{
        K_i \ar@{}\ar[r]^{h_{i,i+1}} & K_{i+1} \\
        K_{i+1,i+1} \ar [u]^{k_{i+1,i+1}} \ar [r]_{t_{i+1,i+1}} &
        L_{i,i+1} \ar[u]_{\bar{k}_{i+1,i+1}}
      }
      $$
and take the induced morphisms $h_{i+1}:K_{i+1}\to L$. By cellularity, $h_{i+1}$ is in $\Mm$. We put $h_{k,i+1}=h_{i,i+1}h_{ik}$. At limit steps we take colimits. Then by construction $L=K_{\cf{\mu}}$ and $h_0$ is the transfinite composition of $(h_{ij})_{i<j<\cf{\mu}}$. We have just observed that each $h_{ij}$ is in $\cof (\Mm_\lambda)$, so $h_0$ also is.

(\ref{cofib-gen-3}) implies (\ref{cofib-gen-1}): Assume that $\Mm$ is accessible and cofibrantly generated in $\ck$. Let $\Xx$ be a subset of $\Mm$ so that $\Mm = \cof (\Xx)$. Let $\lambda$ be a big-enough uncountable regular cardinal such that $\ck$ and $\ck_{\Mm}$ are $\lambda$-accessible, and all the morphisms in $\Xx$ have $\lambda$-presentable domain and codomain. Note that, by coherence, for any regular $\mu \ge \lambda$, an object which is $\mu$-presentable in $\ck$ is $\mu$-presentable in $\ck_{\Mm}$. We claim that $\ck_{\Mm, \smallnf}$ is $\lambda$-accessible. First, $\ck_{\Mm, \smallnf}$ is closed under directed colimits in $\ck_{\Mm}$ by Lemma \ref{dir-colim-lem}. Now let $\Mm_\lambda$ be the class of morphisms in $\Mm$ with $\lambda$-presentable domain and codomain and let $\Mm^\ast$ be the class of morphisms in $\Mm$ that are $\lambda$-directed colimit (in $\ck_{\Mm, \smallnf}$) of morphisms in $\Mm_\lambda$. It suffices to see that $\Mm^\ast = \Mm$.

First, any pushout of a morphism in $\Mm_\lambda$ is in $\Mm^\ast$. Consider such a pushout
$$  
      \xymatrix@=3pc{
        K \ar@{}\ar[r]^h{} & L \\
        K_0 \ar [u]^{k_0} \ar [r]_{h_0} &
        L_0 \ar[u]_{l_0}
      }
      $$
where $K_0$ and $L_0$ are $\lambda$-presentable. Then $K$ is a $\lambda$-directed colimits of $\lambda$-presentable objects $K_i$ above $K_0$ in $\ck_{\Mm}$.
Consider pushouts
$$
\xymatrix@C=3pc@R=3pc{
K \ar [r]^{h}  & L \\
K_i \ar[r]^{h_i} \ar [u]^{}  & L_i \ar [u]_{}\\
K_0 \ar [r]_{h_0} \ar [u]^{} & L_0 \ar [u]_{}
}
$$ 
It is easy to check that the $L_i$'s are also $\lambda$-presentable and that $h=\colim h_i$ in $\ck_{\Mm, \smallnf}$. Thus $h \in \Mm^\ast$.

Second, $\Mm^\ast$ is closed under compositions of morphisms from $\Po_\lambda$ where $\Po_\lambda$ consists of pushouts of morphisms from $\Mm_\lambda$. Let $f:K\to L$ and $g:L\to M$ belong to $\Po_\lambda$. As above, $f$ is a $\lambda$-directed colimit (in $\ck_{\Mm, \smallnf}$), $(k_i,l_i):f_i\to f$ of $f_i\in\Mm_\lambda$, $f_i : K_i \to L_i$. Moreover, $g$ is a pushout of $g_0:L_0\to M_0$ having $L_0$ and $M_0$ both $\lambda$-presentable. Without loss of generality, we can assume that $L_0\to L$ factors through the $L_i$. We then take pushouts as above
$$
\xymatrix@C=3pc@R=3pc{
L \ar [r]^{g}  & M \\
L_i \ar[r]^{g_i} \ar [u]^{}  & M_i \ar [u]_{}\\
L_0 \ar [r]_{g_0} \ar [u]^{} & M_0 \ar [u]_{}
}
$$
This shows that $gf$ is a $\lambda$-directed colimit of the $g_if_i$'s in $\ck_{\Mm, \smallnf}$.

Third, $\Mm^\ast$ is closed under transfinite compositions of morphisms from $\Po_\lambda$. Let $(f_{ij})_{i,j\leq \alpha}$ be such a transfinite composition. At limit steps, $f_{0i}$ is the following directed colimit in $\ck_{\Mm, \smallnf}$:
$$  
      \xymatrix@=3pc{
        K_0 \ar@{}\ar[r]^{f_{0i}} & K_i \\
        K_0 \ar [u]^{\id} \ar [r]_{f_{0j}} &
        K_j \ar[u]_{f_{ji}}
      }
      $$

This shows that $f_{0,i}$ is in $\Mm^\ast$ (we used \cite{indep-categ-advances}, 3.12). 
      
We have shown that any transfinite composition of pushouts from $\Mm_\lambda$ is in $\Mm^\ast$. That is, $\cell (\Mm_\lambda) = \Tc (\Po (\Mm_\lambda)) \subseteq \Mm^\ast$. Since $\Mm$ is closed under pushouts, retracts, and transfinite compositions, $\cof (\Xx) \cap \ck_\lambda^2 \subseteq \Mm_\lambda$. By \cite{fat-small-obj}, B1, it follows that $\Mm = \cof (\Xx) = \cell (\Mm_\lambda)$. We deduce that $\Mm = \Mm^\ast$, as desired.
\end{proof}


\begin{example} \
  \begin{enumerate}
  \item On any locally presentable category $\ck$, there are two trivial cellular structures -- the \emph{discrete} $(\ck,\Iso)$ and the \emph{indiscrete} $(\ck,\ck^2)$. They are both combinatorial  (see \cite{mr}), coherent and $\aleph_0$-continuous. The first one is not accessible because $\ck_{\Iso}$ is not accessible (as long as $\ck$ is not small, in any case). The second is accessible and yields a stable independence relation where every commutative square is independent.
  \item On every locally presentable category $\ck$, there is a cellular structure where $\cm$ consists of regular monomorphisms. This cellular category is accessible, retract-closed and coherent. If $\ck$ is locally finitely presentable, it is $\aleph_0$-continuous. Concrete examples include graphs with induced subgraph embeddings, groups, Banach spaces, boolean algebras, Hilbert spaces, and any Grothendieck topos. The last two are combinatorial, hence have a stable independence notion. See \cite{indep-categ-advances} for more details.
  \item On every locally finitely presentable category $\ck$, there is a cellular structure where $\cm$ consists of pure monomorphisms.
This cellular category is accesible, retract-closed, coherent and $\aleph_0$-continuous. When this cellular structure is combinatorial is discussed in \cite{purity-algebra} and \cite{lprv-purecofgen-v3}. For example, the latter shows that $(\ck, \cm)$ is combinatorial for any additive category $\ck$.
  \end{enumerate}
\end{example}

Often, it is natural to look not at all objects, but just those objects $A$ so that $0 \to A$ is in $\Mm$ (where $0$ is an initial
object):

\begin{defin}\label{fib-def}
Let $(\ck,\cm)$ be a cellular category. An object $A$ is called \emph{cellular} if $0\to A$ is cellular. Let $\cc$ denote the full subcategory of $\ck$ consisting of cellular objects.
\end{defin}

\begin{rem}\label{fib-rmk}
Let $\cm_0$ be the class of cellular morphisms with a cellular domain (then the codomain is cellular too). Then $(\cc,\cm_0)$
satisfies all properties of a cellular category up to cocompleteness of $\cc$. Thus it induces a subcategory $\cc_{\cm_0}$ of $\cc$
consisting of cellular objects and cellular morphisms.

If $\cm$ is coherent, then every cellular morphism $A\to B$ with $B\in\cc$ has $A\in\cc$.
\end{rem}

We have the following version of Theorem \ref{cofib-gen} for cofibrant objects. Its advantage is that we do not need to assume that
$(\ck,\cm)$ itself is accessible: it suffices to have $\ck$ accessible.

\begin{thm}\label{cofib-gen-cof}
Let $\ck$ be a retract-closed, coherent and $\aleph_0$-continuous cellular category such that $\ck$ is accessible. The following are equivalent:

  \begin{enumerate}
  \item $\Cc_{\Mm_0}$ has a stable independence notion.
  \item $\Mm_0$-effective squares form a stable independence notion in $\Cc_{\Mm_0}$.
  \item $\Mm_0$ is cofibrantly generated in $\Cc$.
  \end{enumerate}
\end{thm}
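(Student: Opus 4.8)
The plan is to rerun the proof of Theorem~\ref{cofib-gen}, replacing the cellular category $(\ck,\cm)$ throughout by the pair $(\Cc,\Mm_0)$ of Definition~\ref{fib-def} and Remark~\ref{fib-rmk}. By that remark $(\Cc,\Mm_0)$ obeys every cellular axiom except cocompleteness of $\Cc$, and it inherits retract-closedness, coherence and $\aleph_0$-continuity from $(\ck,\cm)$. The only colimits that actually occur in the proofs of Theorem~\ref{weak-stable-thm}, Lemma~\ref{dir-colim-lem} and Theorem~\ref{cofib-gen} are pushouts along $\Mm$-morphisms, their finite and transfinite composites, and directed colimits of $\Mm$-morphisms; inside $\Cc$ these amount to the corresponding constructions for $\Mm_0$, all of which exist and keep us in $\Cc$ (a pushout of a morphism of $\Mm_0$ lies in $\Mm_0$; a transfinite composite of such is cellular with cellular domain; and, by $\aleph_0$-continuity and the argument of Remark~\ref{coherent-prop}(\ref{coherent-prop-2}), a directed colimit in $\Cc_{\Mm_0}$ formed in $\ck$ has cellular colimit). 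Hence $\Cc$ has directed colimits, $\Cc_{\Mm_0}$ is closed under them, and the proofs of Theorem~\ref{weak-stable-thm} and Lemma~\ref{dir-colim-lem} apply verbatim to show that $\Mm_0$-effective squares form a weakly stable, $\aleph_0$-continuous independence relation on $\Cc_{\Mm_0}$. For $(1)\Rightarrow(2)$: since $\Cc_{\Mm_0}$ has directed colimits it has chain bounds, so canonicity (Theorem~\ref{canon-thm}) forces any stable independence notion on $\Cc_{\Mm_0}$ to coincide with the one given by $\Mm_0$-effective squares, which is therefore stable; if all morphisms of $\Mm_0$ are monos one may use \cite[9.1]{indep-categ-advances} instead.

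For $(2)\Rightarrow(3)$: stability gives that $(\Cc_{\Mm_0})_{\smallnf}$ is accessible, hence by Remark~\ref{independence}(\ref{independence-3}) so is $\Cc_{\Mm_0}$, and $(\Cc_{\Mm_0})_{\smallnf}$ has directed colimits by Lemma~\ref{dir-colim-lem}. So \cite[9.12]{internal-improved-v2} applies: we may fix a regular uncountable $\lambda$ with $\Cc_{\Mm_0}$ and $(\Cc_{\Mm_0})_{\smallnf}$ both $\lambda$-accessible, $(\Cc_{\Mm_0})_{\smallnf}$ almost well $\lambda$-filtrable, and the $\mu^+$-presentable objects of $(\Cc_{\Mm_0})_{\smallnf}$ exactly the morphisms of $(\Mm_0)_{\mu^+}$ (those with $\mu^+$-presentable domain and codomain). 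The induction on $\mu$ from the proof of $(2)\Rightarrow(3)$ of Theorem~\ref{cofib-gen} then goes through word for word with $\Cc$ for $\ck$ and $\Mm_0$ for $\Mm$: every $h\in(\Mm_0)_{\mu^+}$ is a retract of a filtrable object of $(\Cc_{\Mm_0})_{\smallnf}$, and a filtrable such $h$ is displayed as a smooth transfinite composite of pushouts of morphisms of $\cof((\Mm_0)_\lambda)$, cellularity being invoked at each successor step, so $h\in\cof((\Mm_0)_\lambda)$. Thus $\Mm_0=\cof((\Mm_0)_\lambda)$ is cofibrantly generated in $\Cc$.

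For $(3)\Rightarrow(1)$: write $\Mm_0=\cof(\Xx)$ for a set $\Xx\subseteq\Mm_0$. As $\ck$ is cocomplete and accessible it is locally presentable, and $\Cc$ is precisely the full subcategory of $\Xx$-cellular objects (those $A$ with $0\to A\in\cof(\Xx)$). One first checks --- and this is where local presentability of $\ck$ enters --- that $\Cc$ and $\Cc_{\Mm_0}$ are accessible; this plays the role that the standing accessibility of $(\ck,\cm)$ plays in Theorem~\ref{cofib-gen}. Choosing $\lambda$ large enough that $\Cc$ and $\Cc_{\Mm_0}$ are $\lambda$-accessible, that an object $\mu$-presentable in $\Cc$ (for $\mu\ge\lambda$) is $\mu$-presentable in $\Cc_{\Mm_0}$ (by coherence), and that every morphism of $\Xx$ has $\lambda$-presentable domain and codomain, one runs the proof of $(3)\Rightarrow(1)$ of Theorem~\ref{cofib-gen} inside $\Cc$: the class $\Mm_0^\ast$ of morphisms that are $\lambda$-directed colimits in $(\Cc_{\Mm_0})_{\smallnf}$ of morphisms from $(\Mm_0)_\lambda$ contains every pushout of a morphism of $(\Mm_0)_\lambda$ and is closed under composition and transfinite composition of such pushouts, so $\cell((\Mm_0)_\lambda)\subseteq\Mm_0^\ast$; since $\cof(\Xx)\cap\Cc_\lambda^2\subseteq(\Mm_0)_\lambda$, \cite{fat-small-obj}, B1 gives $\Mm_0=\cof(\Xx)=\cell((\Mm_0)_\lambda)=\Mm_0^\ast$, whence $(\Cc_{\Mm_0})_{\smallnf}$ is $\lambda$-accessible, i.e.\ $\Mm_0$-effective squares form a stable independence notion on $\Cc_{\Mm_0}$, which is $(1)$.

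The step I expect to be the main obstacle is exactly this accessibility bookkeeping. In Theorem~\ref{cofib-gen} accessibility of the ambient cellular data is assumed outright, whereas here it must be reconstructed: in $(2)\Rightarrow(3)$ from Remark~\ref{independence}(\ref{independence-3}), and in $(3)\Rightarrow(1)$ from the fact --- standard in the theory of combinatorial structures, but requiring a reference or a short argument --- that in a locally presentable category the $\Xx$-cellular objects, and the left class of the cofibrantly generated weak factorization system they span viewed as a category, form an accessible, accessibly embedded full subcategory. One must also keep ``$\lambda$-presentable'' interpreted relative to $\Cc$ and $\Cc_{\Mm_0}$ rather than $\ck$ (matching the two via coherence for large $\lambda$, as in Theorem~\ref{cofib-gen}), and verify that pushouts and transfinite composites computed in $\Cc$ agree with those computed in $\ck$ --- which holds because the morphisms involved lie in $\Mm_0$.
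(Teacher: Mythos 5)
Your proposal is correct and follows essentially the same route as the paper's own (very terse) proof, which simply says to adapt the argument of Theorem~\ref{cofib-gen}; the one point you flag as the main obstacle --- reconstructing accessibility of $\cc_{\cm_0}$ from cofibrant generation in the direction $(3)\Rightarrow(1)$ --- is exactly the point the paper singles out, citing \cite[5.2]{fat-small-obj} for it. No gap remains beyond supplying that reference.
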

\begin{proof}
Similar to the proof of Theorem \ref{cofib-gen}. Following \cite{fat-small-obj} 5.2, (3) implies that $\cc_{\cm_0}$ is accessible.
\end{proof}

In many cases, the cellular squares will be pullback squares:

\begin{fact}[\cite{ringel}, {\cite[11.15]{joy-of-cats}}]
Let $(\ck,\Mm)$ be a cellular category where every cellular morphism is a monomorphism. If:

  \begin{enumerate}
  \item A pullback of two morphisms in $\Mm$ is again in $\Mm$.
  \item Every epimorphism in $\Mm$ is an isomorphism.
  \end{enumerate}

  Then every cellular square is a pullback square.
\end{fact}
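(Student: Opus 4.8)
The plan is to use the monomorphism $t$ coming from the pushout to reduce the statement to the assertion that a \emph{pushout square} of two morphisms of $\Mm$ is a pullback square, and then to analyse the canonical map from the span of the pushout to its pullback over $P$. I write the cellular square with its span $g : A \to C$, $f : A \to B$ and cocone $u : C \to D$, $v : B \to D$ as in Definition~\ref{cellular-square}, let $P$ be the pushout of $C \xleftarrow{g} A \xrightarrow{f} B$ with coprojections $p_C : C \to P$, $p_B : B \to P$, and let $t : P \to D$ be the induced map, so that $u = t p_C$ and $v = t p_B$. By cellularity $t \in \Mm$, hence $t$ is monic by hypothesis; and for a cellular square arising in $\ck_\Mm$ all four edges lie in $\Mm$, so $p_C$ (a pushout of $f$) and $p_B$ (a pushout of $g$) are monic as well. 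Since $t$ is monic, a cone $(x_C : X \to C,\ x_B : X \to B)$ satisfies $u x_C = v x_B$ if and only if $p_C x_C = p_B x_B$; hence the original square is a pullback square exactly when the pushout-comparison square on $A, B, C, P$ is. So it suffices to show: if $f, g \in \Mm$, the pushout square on $A,B,C,P$ is a pullback.

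For this I would form the pullback $R := B \times_P C$ (whose existence is presupposed by hypothesis~(1)), with projections $q_C : R \to C$, $q_B : R \to B$; by~(1) both lie in $\Mm$, hence are monic. The pushout yields a unique $e : A \to R$ with $q_C e = g$, $q_B e = f$, and $e$ is monic because $f = q_B e$ is. The whole statement is then equivalent to $e$ being an isomorphism. As a first step I would check $e \in \Mm$: form the pullback $T$ of $g$ along $q_C$, with projections $\pi_A : T \to A$, $\pi_R : T \to R$ (both in $\Mm$ by~(1)); the pair $(\id_A, e)$ induces a section of $\pi_A$, so $\pi_A$ is a split epimorphism lying in $\Mm$, hence an isomorphism by hypothesis~(2); therefore $e = \pi_R\pi_A^{-1}$ is a composite of morphisms of $\Mm$, and $e \in \Mm$. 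Given this, it remains to prove $e$ is an epimorphism, after which~(2) forces $e$ to be an isomorphism.

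The step ``$e$ is epic'' is the heart of the matter, and it is here that hypotheses~(1) and~(2) do the real work. Unravelled, it says exactly that $q_B(R) = p_B^{-1}(p_C(C))$ coincides with $f(A)$ as a subobject of $B$ --- that the pushout $P = B\sqcup_A C$ glues $B$ and $C$ together \emph{precisely} along $A$, with no further collapse. One useful observation is that the square on $R,B,C,P$ is itself a pushout: since $p_Bq_B = p_Cq_C$, the pair $(p_B,p_C)$ is a cocone on $B\xleftarrow{q_B}R\xrightarrow{q_C}C$, and comparing the pushout of that span with $P$, using that the coprojections are jointly epimorphic, shows the two comparison maps compose to identities in both orders; thus $P\cong B\sqcup_R C$ compatibly. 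The remaining diagram chase, exploiting both presentations $P = B\sqcup_A C = B\sqcup_R C$, is meant to show that no pair of maps out of $R$ can be separated by $e$; hypothesis~(2) is precisely what rules out the obstruction, namely an epic monomorphism inside $\Mm$, which would let the pushout collapse (as in the square with $C=B$ and $f=g$ an epic mono, whose pushout is $B$ and whose pullback corner is $B\neq A$). This final point is the content of \cite{ringel} and \cite[11.15]{joy-of-cats}; since the statement is invoked here as a known fact, one may either reproduce that argument or simply cite it. The main obstacle, then, is exactly this: to establish, purely from~(1) and~(2), that the intersection of $B$ and $C$ inside the pushout is no larger than $A$.
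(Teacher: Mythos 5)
This statement is given in the paper as a \emph{Fact} with citations to Ringel and to \cite[11.15]{joy-of-cats}; the paper supplies no proof, so there is nothing internal to compare your argument against. Judged on its own terms, your proposal sets up the right framework but does not close. The reduction via the monomorphism $t$ (cones over $u,v$ coincide with cones over $p_C,p_B$, so the cellular square is a pullback iff the pushout square $A,B,C,P$ is) is correct, as is your observation that the edges and the pushout coprojections are monos in $\Mm$. Your argument that the comparison map $e : A \to R = B\times_P C$ lies in $\Mm$ is also sound and is a nice use of hypothesis~(2): the pullback projection $\pi_A : T \to A$ is a split epimorphism in $\Mm$, hence an isomorphism, and $e = \pi_R\pi_A^{-1}$ is a composite of $\Mm$-morphisms.

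The genuine gap is the step you yourself flag: that $e$ is an epimorphism. Everything up to that point is formal bookkeeping; the claim that the pushout $B\sqcup_A C$ identifies $B$ and $C$ along \emph{exactly} $A$ --- Ringel's ``intersection property of amalgamations'' --- is the entire mathematical content of the Fact, and it is not a routine diagram chase from (1) and (2). Your plan is to apply (2) to $e$, but (2) only converts \emph{epic} $\Mm$-morphisms into isomorphisms, so you must first establish that $e$ is epic by some independent argument; the observation that $P \cong B\sqcup_R C$ as well as $B\sqcup_A C$ does not by itself yield this, and your text offers only the intention to ``reproduce that argument or simply cite it.'' Deferring the one nontrivial step to the very sources the Fact is attributed to means the proposal is not a proof. (Your heuristic that hypothesis~(2) ``rules out the obstruction'' is supported by the $\mathbb{Z}\hookrightarrow\mathbb{Q}$-type counterexample you mention, but identifying the obstruction in an example is not the same as showing that no other obstruction can occur in general.)
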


Conversely, it is natural to ask whether every pullback square is cellular. When $\Mm$ is the class of regular monomorphisms, categories with this property are said to have \emph{effective unions}, a condition isolated by Barr \cite{effective-unions}.  The connections of this special case with stable independence were investigated in \cite[\S5]{indep-categ-advances}, where it was shown that having effective unions implies that effective squares form a stable independence notion. We show that the definition can be naturally parameterized by $\Mm$ (this was done already for pure morphisms in \cite[2.2]{purity-algebra}), and the corresponding results generalized. 

\begin{defin}\label{effective-unions-def}
We say that a cellular category $(\ck,\cm)$ has \emph{effective unions} if

  \begin{enumerate}
  \item The pullback of any two morphisms in $\Mm$ with common codomain exists and the projections are again in $\Mm$.
  \item Any pullback square with morphisms in $\Mm$ is cellular.
  \end{enumerate}
\end{defin}

\begin{thm}\label{effective-unions-thm}
Let $(\ck,\cm)$ be a cellular category which is coherent, has effective unions, and with $\ck$ accessible. Then $(\ck, \Mm)$ is accessible if and only if cellular squares form a stable independence notion in $\ck_{\Mm}$.
\end{thm}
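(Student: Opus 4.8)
The plan is to prove the two implications separately, in each case taking Theorem~\ref{weak-stable-thm} as the starting point: by coherence it already gives that cellular squares form a \emph{weakly} stable independence relation $\smallnf$ in $\ck_\Mm$, so in both directions the remaining task is to match accessibility of the relation $\smallnf$ (equivalently, accessibility of $\ck_{\Mm,\smallnf}$) with accessibility of the cellular category $(\ck,\Mm)$, and this is where effective unions does the work.

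For ``$(\ck,\Mm)$ accessible $\Rightarrow$ stable independence'' I would fix $\lambda$ witnessing $\lambda$-accessibility of $(\ck,\Mm)$; then $(\ck,\Mm)$ is $\lambda$-continuous, so by Lemma~\ref{dir-colim-lem} the relation $\smallnf$ is $\lambda$-continuous, i.e.\ $\ck_{\Mm,\smallnf}$ is closed under $\lambda$-directed colimits inside $(\ck_\Mm)^2$, which is itself $\lambda$-accessible since $\ck_\Mm$ is. The remaining point is to promote this closure to genuine $\lambda$-accessibility of $\ck_{\Mm,\smallnf}$. Here I would use effective unions: by Definition~\ref{effective-unions-def} every pullback square of $\Mm$-morphisms is cellular, and conversely, given a cellular square one can form the pullback $B\times_D C$ of its right leg $B\to D$ against $C\to D$ (with $\Mm$-projections, by effective unions) together with the comparison $A\to B\times_D C$ (in $\Mm$, by coherence), so that a cellular square is controlled by pullback data plus the datum of an $\Mm$-morphism into the pullback corner. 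This should exhibit $\ck_{\Mm,\smallnf}$ as an accessibly embedded, accessible subcategory of $(\ck_\Mm)^2$; concretely, I would identify the $\lambda$-presentable objects of $\ck_{\Mm,\smallnf}$ with the $\Mm$-morphisms between $\lambda$-presentable objects of $\ck$ (using coherence, which makes $\lambda$-presentability in $\ck$ and in $\ck_\Mm$ agree here) and check that every object of $\ck_{\Mm,\smallnf}$ is a $\lambda$-directed colimit of these, whence $\smallnf$ is a stable independence relation.

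For the converse, accessibility of $\smallnf$ gives that $\ck_{\Mm,\smallnf}$ is $\mu$-accessible for some $\mu$, and hence $\ck_\Mm$ is accessible by \cite[3.27]{indep-categ-advances} (cf.\ Remark~\ref{independence}(\ref{independence-3})). What is missing is $\lambda$-continuity of $(\ck,\Mm)$, i.e.\ closure of $\ck_\Mm$ under $\lambda$-directed colimits computed in $\ck$. To obtain it I would take a $\lambda$-directed diagram $(D_i)$ in $\ck_\Mm$, form its colimit $C$ in the locally presentable category $\ck$, and argue that each coprojection $D_i\to C$ lands in $\Mm$ and that $C$ is already the colimit in $\ck_\Mm$: for $i\le j,j'\le k$ the pullbacks $D_j\times_{D_k}D_{j'}$ carry $\Mm$-projections and are cellular by effective unions, finite limits commute with $\lambda$-directed colimits in $\ck$, and combining the resulting description of $D_j\times_C D_{j'}$ as a $\lambda$-directed colimit of cellular squares with the accessibility of $\ck_{\Mm,\smallnf}$ already in hand should force the comparison maps into $\Mm$. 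Taking $\lambda'$ large enough to also witness $\lambda'$-accessibility of $\ck$ and $\ck_\Mm$ then yields that $(\ck,\Mm)$ is $\lambda'$-accessible.

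The main obstacle is the first direction: closure of $\ck_{\Mm,\smallnf}$ under $\lambda$-directed colimits in an accessible ambient category does not by itself give accessibility, and making precise how effective unions supplies a generating set of $\lambda$-presentable objects (via the pullback-square presentation above) is the delicate step. The additional difficulty beyond \cite[\S5]{indep-categ-advances} is precisely that $\Mm$ need not consist of monomorphisms, so cellular squares need not coincide with pullback squares and the identification is only one-directional, to be supplemented by the coherence argument.
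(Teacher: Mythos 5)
Your reduction is the right one---by Theorem~\ref{weak-stable-thm} and Lemma~\ref{dir-colim-lem} everything comes down to accessibility of $\ck_{\Mm,\smallnf}$, and the easy direction is, as in the paper, essentially just Remark~\ref{independence}(\ref{independence-3})---but at the crux of the hard direction your proposal stops exactly where the proof has to start, and the one concrete plan you do state would not work as written. The paper's argument is: given $f\colon C\to D$ in $\Mm$, write $D$ as a $\lambda$-directed colimit of $\lambda$-presentable objects $D_i$ in $\ck_\Mm$ and set $C_i:=C\times_D D_i$. Effective unions gives both that the projections $C_i\to D_i$ and $C_i\to C$ lie in $\Mm$ and that all the resulting pullback squares are cellular, so $(C_i\to D_i)_i$ is a $\lambda$-directed diagram in $\ck_{\Mm,\smallnf}$ with a cocone into $f$; commutation of finite limits with $\lambda$-directed colimits (\cite[1.59]{adamek-rosicky}) then gives $C=\colim_i C_i$, so $f$ is the colimit of this diagram in $\ck_{\Mm,\smallnf}$. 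The role of effective unions is precisely to make these canonical pullback squares into morphisms of $\ck_{\Mm,\smallnf}$; your decomposition of an arbitrary cellular square as ``pullback data plus a comparison map $A\to B\times_D C$'' is not what is needed and does not by itself produce the required $\lambda$-directed presentation.

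The second problem is your intended generating set. You propose to identify the $\lambda$-presentable objects of $\ck_{\Mm,\smallnf}$ with the $\Mm$-morphisms between $\lambda$-presentable objects of $\ck$ and to prove $\lambda$-accessibility. But $C_i=C\times_D D_i$ is a pullback of a large object $C$ against a $\lambda$-presentable $D_i$ over a large $D$, and there is no reason for it to be $\lambda$-presentable; since $\Mm$ need not consist of monomorphisms, you cannot even control $C_i$ by a subobject argument. The paper handles this by observing that the pullback functor is $\lambda$-accessible (again by commutation with $\lambda$-directed colimits), hence preserves $\mu$-presentable objects for arbitrarily large $\mu$; this yields a bound on the presentability ranks of the $C_i\to D_i$ depending only on $\lambda$, and therefore accessibility of $\ck_{\Mm,\smallnf}$ at some higher index---not $\lambda$-accessibility with your proposed generators. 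Without this uniform bound, or some substitute for it, the argument does not close. (Your additional worry about $\lambda$-continuity in the converse direction is reasonable but not where the content lies; the paper disposes of that direction in one line.)
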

\begin{proof}
  If there is a stable independence notion in $\ck_{\Mm}$, then by Remark \ref{independence}(\ref{independence-3}), $(\ck, \Mm)$ is accessible. Let us prove the converse. Pick a regular cardinal $\lambda$ such that $(\ck, \Mm)$ is $\lambda$-accessible. By Theorem \ref{weak-stable-thm}, cellular squares form a weakly stable independence notion and by Lemma \ref{dir-colim-lem} this independence notion is $\lambda$-continuous. It remains to see that $\ck_{\Mm, \smallnf}$ is accessible. Consider an object $C \to D$ of $\ck_{\Mm,\smallnf}$. Since $\Mm$ is $\lambda$-accessible, $D$ can be written as a $\lambda$-directed colimit $\seq{D_i : i \in I}$ of $\lambda$-presentable objects. Let $C_i$ be the pullback of $C$ and $D_i$ over $D$. Then the resulting maps $C_i \to D_i$ form a $\lambda$-directed system. Since $\lambda$-directed colimits commute with finite limits (see \cite[1.59]{adamek-rosicky}, the pullback functor is accessible so must preserve arbitrarily large presentability ranks. Thus there is a bound on the presentability rank of $C_i$ that depends only on $\lambda$. This shows that $\ck_{\Mm, \smallnf}$ is accessible.  
\end{proof}

Note that, as opposed to Theorem \ref{cofib-gen}, we did \emph{not} need to assume that $(\ck, \Mm)$ was $\aleph_0$-continuous (nor that $(\ck, \Mm)$ was retract-closed). However, a category may fail to have effective unions even if the effective squares form a stable independence notion (this is the case, for example, in locally finite graphs with regular monos, see \cite[5.7]{indep-categ-advances}).

As a corollary, we obtain a quick proof that having effective unions implies cofibrant generation. This had been done ``by hand'' before for several special classes of morphisms \cite[1.12]{beke-sheafifiable}, \cite[2.4]{purity-algebra}.

\begin{cor}
  If $(\ck,\cm)$ is an accessible cellular category which is coherent, $\aleph_0$-continuous, and has effective unions, then it is combinatorial.
\end{cor}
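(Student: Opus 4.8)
The plan is to obtain the corollary formally by chaining the two preceding results: Theorem~\ref{effective-unions-thm} upgrades accessibility to a stable independence notion, and then the implication (\ref{cofib-gen-2})$\Rightarrow$(\ref{cofib-gen-3}) of Theorem~\ref{cofib-gen} converts that stable independence notion into combinatoriality.

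For the first step I would verify the hypotheses of Theorem~\ref{effective-unions-thm}: by assumption $(\ck,\cm)$ is coherent and has effective unions, and since $(\ck,\cm)$ is an accessible cellular category the underlying $\ck$ is accessible, indeed locally presentable by Remark~\ref{coherent-prop}(1). Theorem~\ref{effective-unions-thm} then asserts that $(\ck,\cm)$ is accessible if and only if cellular squares form a stable independence notion in $\ck_{\cm}$; the left-hand side holds precisely because $(\ck,\cm)$ was assumed to be an \emph{accessible} cellular category in the sense of Definition~\ref{coherent-def}(4),(5). So cellular squares form a stable independence notion in $\ck_{\cm}$.

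For the second step I would feed this into Theorem~\ref{cofib-gen}: $(\ck,\cm)$ is an accessible cellular category which is coherent and $\aleph_0$-continuous (all by assumption), and retract-closed --- this is already built into the word ``combinatorial'' and so may be taken as part of the data; and we have just established condition~(\ref{cofib-gen-2}), that cellular squares form a stable independence notion in $\ck_{\cm}$. Hence condition~(\ref{cofib-gen-3}) holds, that is, $(\ck,\cm)$ is combinatorial.

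All the real work has been front-loaded into Theorems~\ref{effective-unions-thm} and~\ref{cofib-gen}, so there is no substantial obstacle in the corollary itself; the only things to watch are hypothesis bookkeeping and, in particular, the role of retract-closure. Here it is worth noting that the proof of (\ref{cofib-gen-2})$\Rightarrow$(\ref{cofib-gen-3}) produces the inclusion $\cm\subseteq\cof(\cm_\lambda)$ directly, so the weaker conclusion ``$\cm$ is cofibrantly generated'' --- the phrasing used in the discussion preceding the corollary --- follows without retract-closure; it is only the reverse inclusion $\cof(\cm_\lambda)\subseteq\cm$, hence the full word ``combinatorial'', that uses it, and retract-closure is available in the intended applications (regular monomorphisms in a locally presentable category, pure monomorphisms). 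One could alternatively give a self-contained argument by inlining the pullback construction from the proof of Theorem~\ref{effective-unions-thm} to manufacture the needed filtrations, but chaining the two theorems is cleaner.
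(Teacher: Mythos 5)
Your proposal is correct and follows exactly the paper's own one-line argument: apply Theorem~\ref{effective-unions-thm} to get that cellular squares form a stable independence notion, then the implication (\ref{cofib-gen-2})$\Rightarrow$(\ref{cofib-gen-3}) of Theorem~\ref{cofib-gen} to conclude cofibrant generation. Your observation about retract-closure is also the paper's, which parenthetically notes that retract-closedness is not used for this direction and accordingly states the conclusion as ``cofibrantly generated.''
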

\begin{proof}
  By Theorem \ref{effective-unions-thm} cellular squares form a stable independence notion, so by Theorem \ref{cofib-gen} (noting that retract-closedness is not used for this direction)  $(\ck, \Mm)$ is cofibrantly generated.
\end{proof}
 
\begin{rem}\label{left-induced}
Let $F:\ck\to\cl$ be a colimit-preserving functor from a locally presentable categopry $\ck$ to a combinatorial category $\cl$.
We get a cellular structure on $\ck$ where $f$ is cellular if and only if $Ff$ is cellular. This cellular structure is called
\emph{left-induced} (see \cite[3.8]{mr}). It was shown in \cite{mr}, using a great deal of heavy machinery, that such left-induced cellular structures are combinatorial. With the aid of Theorem~\ref{cofib-gen}, we obtain a special case of this result without any effort.
\end{rem}

\begin{cor}\label{lazy}
Let $F:\ck\to\cl$ be a colimit preserving functor from a locally presentable category to a combinatorial category.
If $\cl$ is coherent and $\aleph_0$-continuous, then $\ck$ is combinatorial.
\end{cor}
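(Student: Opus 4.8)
The plan is to put the left-induced cellular structure on $\ck$ and then quote Theorem~\ref{cofib-gen}. Write $\cn$ for the class of cellular morphisms of $\cl$ and let $\cm$ be the class of morphisms $f$ of $\ck$ with $Ff\in\cn$; by \cite[3.8]{mr} (Remark~\ref{left-induced}) $(\ck,\cm)$ is a cellular category, cocomplete since $\ck$ is locally presentable. The routine part is to transfer the hypotheses on $\cl$ across $F$: $(\ck,\cm)$ is retract-closed since the induced functor $F^2\colon\ck^2\to\cl^2$ preserves retracts and $\cn$ is retract-closed; it is coherent since $F(gf)=(Fg)(Ff)$ and coherence of $\cn$ then forces $Ff\in\cn$ whenever $gf,g\in\cm$; and it is $\aleph_0$-continuous since $F$ preserves directed colimits and $\cl_\cn$ is closed under them in $\cl$. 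It then remains to produce a stable independence notion on $\ck_\cm$ and to see that $(\ck,\cm)$ is accessible; Theorem~\ref{cofib-gen} will deliver condition~(\ref{cofib-gen-3}), i.e.\ that $(\ck,\cm)$ is combinatorial, which is the claim.

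The heart of the matter is the accessibility of $\ck_{\cm,\smallnf}$. First, since $\cl$ is combinatorial, $\cn$ is accessible (by \cite{fat-small-obj}; cf.\ the proof of Theorem~\ref{cofib-gen-cof}), so $(\cl,\cn)$ is an accessible cellular category that is retract-closed, coherent and $\aleph_0$-continuous, and applying Theorem~\ref{cofib-gen} to it shows that cellular squares form a stable independence notion on $\cl_\cn$; in particular $\cl_{\cn,\smallnf}$ is accessible. Now comes the one genuinely load-bearing observation: because $F$ preserves pushouts, it carries the comparison map out of a pushout to the comparison map out of the image pushout, so a commutative square in $\ck$ is cellular for $\cm$ exactly when its $F$-image is a cellular square in $\cl$, and a morphism lies in $\cm$ exactly when its image lies in $\cn$. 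Consequently $\ck_{\cm,\smallnf}$ is isomorphic to the pullback of $F^2\colon\ck^2\to\cl^2$ along the inclusion $J\colon\cl_{\cn,\smallnf}\to\cl^2$, and since $J$ is an isofibration this pullback is also a pseudopullback. Here $\ck^2$ and $\cl^2$ are locally presentable, $F^2$ is accessible (being a colimit-preserving functor between locally presentable categories), and $J$ is accessible because it preserves directed colimits---which uses Lemma~\ref{dir-colim-lem} together with the $\aleph_0$-continuity of $\cl$---so by the closure of accessible categories under pseudopullbacks along accessible functors (see \cite{adamek-rosicky}) we conclude that $\ck_{\cm,\smallnf}$ is accessible.

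To finish, accessibility of $\ck_{\cm,\smallnf}$ gives, via Remark~\ref{independence}(\ref{independence-3}), that $\ck_\cm$ is accessible; together with $\aleph_0$-continuity this makes $(\ck,\cm)$ an accessible cellular category. By Theorem~\ref{weak-stable-thm} cellular squares form a weakly stable independence notion on $\ck_\cm$, and since $\ck_{\cm,\smallnf}$ is accessible this notion is in fact stable. Thus $(\ck,\cm)$ satisfies condition~(\ref{cofib-gen-2}) of Theorem~\ref{cofib-gen} and meets all of that theorem's standing hypotheses, so it is combinatorial, i.e.\ $\ck$ with the left-induced structure is combinatorial. I expect the main obstacle to be the middle paragraph: correctly identifying $\ck_{\cm,\smallnf}$ with the pseudopullback---precisely where colimit-preservation of $F$ is essential---and verifying that the three functors into $\cl^2$ involved are genuinely accessible, after which the standard closure property of accessible categories does the rest.
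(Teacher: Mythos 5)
Your proposal is correct and follows essentially the same route as the paper's own proof: equip $\ck$ with the left-induced cellular structure, observe that preimages of cellular squares are cellular, realize $\ck_{\cm,\smallnf}$ as the (pseudo)pullback of $F^2$ along $\cl_{\cn,\smallnf}\hookrightarrow\cl^2$, invoke closure of accessible categories under pseudopullbacks, and conclude via Theorem~\ref{cofib-gen}. You simply make explicit several points the paper leaves implicit (the isofibration justification for replacing the strict pullback by the pseudopullback, and the accessibility of the functors involved), all of which check out.
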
 
\begin{proof}
Preimages of cellular squares are cellular and the left-induced cellular category $\ck$ is clearly retract-closed, coherent 
and  $\aleph_0$-continuous. We have a pseudopullback
$$  
    \xymatrix@=3pc{
      \ck^2 \ar[r]^{F^2} &\cl^2 \\
      \ck_\downarrow \ar [u] \ar[r]_{F_\downarrow} & \cl_\downarrow \ar[u]
    }
    $$
Since a pseudopullback of accessible categories is accessible (see \cite[Ex.\ 2n]{adamek-rosicky}), $\nf$ in $\ck$ is accessible. The now result follows from \ref{cofib-gen}.   
\end{proof}

\section{Abstract elementary classes of roots of Ext}

Abstract elementary classes (or AECs) are a framework for abstract model theory introduced by Shelah \cite{sh88}. We will use the category-theoretic characterization of Beke-Rosický \cite{beke-rosicky}: they are accessible categories with directed colimits and with all morphisms monomorphisms which embed ``nicely'' into finitely accessible categories.

\begin{lem}\label{aec}
  Let $(\ck,\cm)$ be an accessible cellular category which is coherent and $\aleph_0$-continuous. Assume that $\ck$ is finitely accessible and all morphisms in $\Mm$ are monomorphisms.

  \begin{enumerate}
  \item $\ck_{\cm}$ is an abstract elementary class.
  \item If $(\ck, \Mm)$ is combinatorial, then $\cc_{\cm_0}$ (see Definition \ref{fib-def}, Remark \ref{fib-rmk}) is an abstract elementary class.
  \end{enumerate}
\end{lem}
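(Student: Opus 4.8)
The plan, in both parts, is to verify the Beke--Rosick\'y criterion \cite{beke-rosicky} recalled above: a category is an abstract elementary class exactly when it is accessible, has directed colimits, all of whose morphisms are monomorphisms, and admits a faithful, iso-full, accessible functor into a finitely accessible category which preserves directed colimits and sends morphisms to monomorphisms. So in each case I would exhibit one such functor and check the remaining clauses against the hypotheses.

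For (1), the candidate is $\ck_\cm$ equipped with the inclusion $\ck_\cm \hookrightarrow \ck$. Accessibility of $\ck_\cm$ is part of the hypothesis that $(\ck,\cm)$ is an accessible cellular category (Definition~\ref{coherent-def}). Since $\ck$ is cocomplete (being cellular) it has directed colimits, and $\aleph_0$-continuity says precisely that $\ck_\cm$ is closed under these in $\ck$; hence $\ck_\cm$ has directed colimits and the inclusion preserves them. Every morphism of $\ck_\cm$ lies in $\cm$, hence is a monomorphism in $\ck$ by hypothesis, and therefore a monomorphism in $\ck_\cm$ as well. Finally, the target $\ck$ is finitely accessible by assumption; the inclusion is faithful; it is iso-full because $\cm$ contains all isomorphisms of $\ck$ (so these are also isomorphisms of $\ck_\cm$); it preserves directed colimits and sends morphisms to monomorphisms by the two preceding observations; and it is accessible, being a directed-colimit-preserving functor between accessible categories. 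By \cite{beke-rosicky}, $\ck_\cm$ is an AEC.

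For (2), the candidate is $\cc_{\cm_0}$ with the composite $\cc_{\cm_0} \hookrightarrow \ck_\cm \hookrightarrow \ck$ into the finitely accessible category $\ck$. Using Remark~\ref{fib-rmk} and $\aleph_0$-continuity one checks, exactly as in part (1), that $\cc$ is closed under directed colimits in $\ck$, that directed colimits of $\cm_0$-morphisms lie again in $\cm_0$ (so $\cc_{\cm_0}$ has directed colimits preserved by the inclusion), that an isomorphism of $\ck$ between cellular objects lies in $\cm_0$ (it is cellular and has cellular domain), giving iso-fullness, and that every morphism of $\cc_{\cm_0}$ is a monomorphism since $\cm_0 \subseteq \cm$. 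The only clause not supplied by the blanket hypotheses --- and the one point where combinatoriality enters --- is the accessibility of $\cc_{\cm_0}$. I would obtain this as in the proof of Theorem~\ref{cofib-gen-cof}: combinatoriality of $(\ck,\cm)$ yields, after the routine step of producing a set of generators for $\cm_0$ inside $\cc$ (cell complexes built on the initial object), that $\cm_0$ is cofibrantly generated in $\cc$, and then \cite{fat-small-obj}, 5.2 forces $\cc_{\cm_0}$ to be accessible; given this, the composite functor above is accessible and \cite{beke-rosicky} applies. I expect the main obstacle to be exactly this last point --- checking that combinatoriality of $(\ck,\cm)$ descends to cofibrant generation of $\cm_0$ in $\cc$, so that the accessibility result of \cite{fat-small-obj} is available; everything else is bookkeeping against Definitions~\ref{coherent-def} and \ref{fib-def}, Remark~\ref{fib-rmk}, and the Beke--Rosick\'y criterion.
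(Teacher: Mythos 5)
Your proposal is correct and follows essentially the same route as the paper, which simply cites the Beke--Rosick\'y criterion for $\ck_\cm$ and invokes \cite[5.2]{fat-small-obj} for the accessibility of $\cc_{\cm_0}$ in the combinatorial case; you have merely written out the bookkeeping (directed colimits via $\aleph_0$-continuity, monomorphisms, iso-fullness of the inclusion into the finitely accessible $\ck$) that the paper leaves as ``easy to verify.'' You also correctly identify the accessibility of $\cc_{\cm_0}$ as the only place where combinatoriality is needed, exactly as in the paper.
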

\begin{proof}
  It is easy to verify that $\ck_\cm$ satisfies the conditions in \cite[5.7]{beke-rosicky}. When $(\ck, \Mm)$ is combinatorial, one can use \cite[5.2]{fat-small-obj} to see that $\cc_{\cm_0}$ is an AEC as well.
\end{proof}

In what follows, we will apply our main theorem to the AECs studied in \cite{bet}. For a fixed (associative and unital) ring $R$, let $\Rmod$ denote the category of (left) $R$-modules with homomorphisms. It is a locally finitely presentable category.

\begin{defin} Given a class $\cb$ of $R$-modules, we define its \emph{Ext-orthogonality class}, $\fct{\perp_\infty}{\cb}$, as follows:

  $$\fct{\perp_\infty}{\cb}=\{A\,:\,\mbox{Ext}^i(A,N)=0\mbox{ for all } 1 \le i < \omega \text{ and all } N\in \cb\}$$
\end{defin}
  
Roughly speaking, $\fct{\perp_\infty}{\cb}$ is the collection of $R$-modules that do not admit nontrivial extensions by modules in $\cb$. For example, when $\cb$ is the class of all pure injective modules, then $\fct{\perp_\infty}{\cb}$ is exactly the class of flat modules (see \cite[5.3.22, 7.1.4]{enochs-jenda}).

From now on, we assume that $\cb$ is a class of pure injective modules.  Let $\ck := \Rmod$, and let $\Cc$ be the full subcategory of $\Rmod$ with objects from $\fct{\perp_\infty}{\cb}$. Let $\Mm$ be the class of monomorphisms (in $\Rmod$) whose cokernel is in $\fct{\perp_\infty}{\cb}$.  That is, a monomorphism $A \xrightarrow{f} B$ is in $\Mm$ if and only if $B / f[A]$ is in $\fct{\perp_\infty}{\cb}$. Let $\Mm_0$ be the class of elements in $\Mm$ with domain and codomain in $\Cc$. Note that this coincides with the notation from Definition \ref{fib-def}, Remark \ref{fib-rmk}.

The category $\Cc_{\Mm_0}$ is studied from the point of view of model theory by Baldwin-Eklof-Trlifaj \cite{bet}, where they prove it is an AEC. They ask (see \cite[4.1(1)]{bet}) what one can say about tameness and stability in $\Cc_{\Mm_0}$ (see, for example, \cite{baldwinbook09} for the relevant definitions). We now show, using our main theorem and known facts, that $\Cc_{\Mm_0}$ has a stable independence notion, hence (by \cite[8.16]{indep-categ-advances}) it will \emph{always} be stable and tame. 

\begin{thm}\label{ext-thm}
  $(\ck, \Mm)$ is a coherent, $\aleph_0$-continuous, and retract-closed cellular category. Moreover, $\Cc_{\Mm_0}$ is cofibrantly generated in $\Cc$. In particular, $\Cc_{\Mm_0}$ is an AEC with a stable independence notion.
\end{thm}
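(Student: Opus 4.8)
The plan is to verify that $(\ck,\Mm)$ meets the hypotheses of Theorem~\ref{cofib-gen-cof} and that $\Mm_0$ is cofibrantly generated in $\Cc$, and then to read the stable independence notion off that theorem; the assertion that $\Cc_{\Mm_0}$ is an AEC is the main result of \cite{bet}. All the categorical hypotheses on $(\ck,\Mm)$ reduce to elementary homological facts about the orthogonality class $\fct{\perp_\infty}{\cb}$, the one genuinely module-theoretic ingredient being its deconstructibility, for which the assumption that $\cb$ consists of pure injective modules is essential.

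First I would record the closure properties of $\Mm$. The category $\ck=\Rmod$ is locally finitely presentable, hence cocomplete and accessible. The class $\Mm$ contains all isomorphisms since $0\in\fct{\perp_\infty}{\cb}$. It is closed under pushouts: in an abelian category the pushout of a monomorphism along an arbitrary morphism is again a monomorphism and has the same cokernel, which therefore stays in $\fct{\perp_\infty}{\cb}$. It is closed under transfinite compositions: such a composition of monomorphisms is a monomorphism, and its cokernel is a transfinite extension of the cokernels of the partial compositions --- all of which lie in $\fct{\perp_\infty}{\cb}$ --- hence is itself in $\fct{\perp_\infty}{\cb}$ by the Eklof lemma, applied to the presentation $\fct{\perp_\infty}{\cb}=\fct{\perp_1}{\cs}$ where $\cs$ is the class of all cosyzygies, in injective coresolutions, of the modules in $\cb$. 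For coherence, if $gf,g\in\Mm$ then $f$ is a monomorphism, and writing the maps as inclusions $A\subseteq B\subseteq C$ we obtain a short exact sequence $0\to B/A\to C/A\to C/B\to 0$ with $C/A,C/B\in\fct{\perp_\infty}{\cb}$, so the long exact sequence for $\Ext(-,N)$, $N\in\cb$, forces $B/A\in\fct{\perp_\infty}{\cb}$, i.e.\ $f\in\Mm$. Finally, $(\ck,\Mm)$ is retract-closed: a retract in $\ck^2$ of a map in $\Mm$ is a monomorphism whose cokernel is a direct summand of a module in $\fct{\perp_\infty}{\cb}$, and $\fct{\perp_\infty}{\cb}$ is closed under direct summands because $\Ext(-,N)$ is additive.

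The one delicate point is $\aleph_0$-continuity. By Remark~\ref{coherent-prop}(\ref{coherent-prop-2}) it suffices to show $\Mm$ is closed under directed colimits in $\ck^2$, and since directed colimits are exact in $\Rmod$ this reduces to showing that $\fct{\perp_\infty}{\cb}$ is closed under directed colimits. Let $A=\colim_{i\in I}A_i$ with all $A_i\in\fct{\perp_\infty}{\cb}$, and fix $N\in\cb$. The canonical presentation $0\to H\to\bigoplus_{i\in I}A_i\to A\to 0$ is pure exact, and $\bigoplus_i A_i\in\fct{\perp_\infty}{\cb}$ since $\Ext^j(\bigoplus_i A_i,N)\cong\prod_i\Ext^j(A_i,N)=0$ for all $j\ge 1$. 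As $N$ is pure injective, $\Hom(-,N)$ is exact on this sequence, so $\Ext^1(A,N)$ embeds in $\Ext^1(\bigoplus_i A_i,N)=0$. For $j\ge 2$ I induct on $j$: the long exact sequence produces a surjection $\Ext^{j-1}(H,N)\twoheadrightarrow\Ext^j(A,N)$, the next term being $\Ext^j(\bigoplus_i A_i,N)=0$; and $H$ is itself the directed colimit, over $i\in I$, of the kernels $\ker\bigl(\bigoplus_{k\le i}A_k\to A_i\bigr)$, each of which is a direct summand of a coproduct of the $A_k$ and hence lies in $\fct{\perp_\infty}{\cb}$. So $\Ext^{j-1}(H,N)=0$ by induction, and $\Ext^j(A,N)=0$. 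The same reasoning applied to the cokernels of the colimit injections $A_{i_0}\to A$ --- which are directed colimits of the cokernels of the maps $A_{i_0}\to A_i$, all in $\fct{\perp_\infty}{\cb}$ --- shows these injections lie in $\Mm$, so $\ck_\Mm$ is indeed closed under directed colimits.

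By the last two paragraphs $(\ck,\Mm)$ is a retract-closed, coherent, $\aleph_0$-continuous cellular category with $\ck$ accessible, so Theorem~\ref{cofib-gen-cof} applies; it remains to see that $\Mm_0$ is cofibrantly generated in $\Cc$ (note that the cellular objects are exactly the modules in $\fct{\perp_\infty}{\cb}$, so $\Cc$ is the category studied in \cite{bet}). Here I would invoke \cite{bet}: $\fct{\perp_\infty}{\cb}$ is deconstructible, so $\Mm=\cof(\Xx)$ in $\ck$ for a set $\Xx$ of monomorphisms between $\kappa$-presentable modules, for a suitable $\kappa$, whose cokernels lie in $\fct{\perp_\infty}{\cb}$; every module occurring in the closure of $\Xx$ under pushouts, transfinite compositions and retracts computed inside $\Cc$ (which has the requisite structure by Remark~\ref{fib-rmk}) is then cellular, so $\Xx\subseteq\Mm_0$ and $\Mm_0=\cof(\Xx)$ in $\Cc$. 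Theorem~\ref{cofib-gen-cof} now yields a stable independence notion on $\Cc_{\Mm_0}$, and since $\Cc_{\Mm_0}$ is an AEC by \cite{bet}, the proof is complete. I expect the main obstacle not to be the categorical bookkeeping --- routine once Theorem~\ref{cofib-gen-cof} is available --- but the module-theoretic input in this last step, together with the directed-colimit closure of $\fct{\perp_\infty}{\cb}$ above; both are precisely where the hypothesis that $\cb$ consists of pure injective modules does the real work.
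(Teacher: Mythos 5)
Your overall route is the same as the paper's: verify the hypotheses of Theorem~\ref{cofib-gen-cof}, establish cofibrant generation of $\Mm_0$ in $\Cc$, and conclude via Lemma~\ref{aec} (or \cite{bet}) for the AEC part. Your hands-on verifications of the cellular axioms, coherence, retract-closure and $\aleph_0$-continuity --- where the paper simply cites \cite[1.6, 1.14]{bet} and \cite[4.2]{flat-covers-factorizations} --- are correct; in particular the pure-exactness argument for closure of $\fct{\perp_\infty}{\cb}$ under directed colimits, with the dimension shift through the kernel $H$ and the identification of $H$ as a directed union of split summands of coproducts, is exactly where the pure injectivity of $\cb$ is meant to enter.

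The gap is in the final step. You take $\Xx$ to be ``a set of monomorphisms between $\kappa$-presentable modules whose cokernels lie in $\fct{\perp_\infty}{\cb}$'' and conclude $\Xx\subseteq\Mm_0$. That conclusion does not follow from what you have stated: membership in $\Mm_0$ requires the \emph{domain and codomain} of each generator to lie in $\fct{\perp_\infty}{\cb}$, and a monomorphism can have cokernel in $\fct{\perp_\infty}{\cb}$ without either end being there. The clause ``every module occurring in the closure of $\Xx$ computed inside $\Cc$ is then cellular'' is circular (objects of $\Cc$ are cellular by definition) and presupposes the very point at issue, namely that $\Xx$ lives in $\Cc$ so that the closure can be formed there. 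The paper closes this by using the specific shape of the generating set produced by deconstructibility (the proof of \cite[4.5]{flat-covers-factorizations}): each generator $f$ sits in a short exact sequence $0\to A\xrightarrow{f} F\to B\to 0$ with $F$ free and $B$ a $\theta$-presentable member of $\fct{\perp_\infty}{\cb}$. Freeness gives $F\in\Cc$, and coherence (Remark~\ref{fib-rmk}) then gives $A\in\Cc$, so $f\in\Mm_0$. With that emendation --- which is available from the same deconstructibility input you already invoke --- your argument goes through.
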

\begin{proof}
  The ``in particular'' part of the statement follows from Theorem \ref{cofib-gen-cof} and Lemma \ref{aec}. For the first sentence, following \cite[4.2]{flat-covers-factorizations}, $(\ck, \Mm)$ is a retract-closed cellular category. The coherence was observed in \cite[1.14]{bet} and $\aleph_0$-continuity in \cite[1.6]{bet}. In fact, the latter follows from \ref{coherent-prop}(\ref{coherent-prop-2}) because $\ck_{\Mm}$ is closed under directed colimits in $\Rmod$ (as outlined in, for example, \cite[\S1]{bet}). It remains to see that $\Cc_{\Mm_0}$ is cofibrantly generated in $\Cc$.

  By \cite[Proposition 2]{flat-cover}, \cite[Theorem 8]{covers-ext}, $\Cc_{\Mm_0}$ has refinements. This means there exists a regular cardinal $\theta$ so that any object of $\Cc$ can be written as the union of an increasing smooth chain $\seq{A_i : i < \alpha}$ of submodules, with $A_0$ the zero module and for all $i < \alpha$, $A_{i + 1} / A_i$ in $\Cc$ and $\theta$-presentable.

  By the proof of \cite[4.5]{flat-covers-factorizations}, $\Mm$ is cofibrantly generated by a set of maps $f$ so that $0 \to A \xrightarrow{f} F \to B \to 0$ is a short exact sequence, $F$ is a free module, and $B$ is a $\theta$-presentable object of $\cl$. Since $F$ is free, $F \in \Cc$ as well, hence $A \in \Cc$. Thus $f \in \Mm_0$. Thus $\Mm$ is cofibrantly generated in $\ck$ by a subset of $\Mm_0$, showing in particular that $\Mm_0$ is cofibrantly generated in $\Cc$.
\end{proof}

In this case, the cellular squares can be given a very concrete description:

\begin{prop}
A square
$$
\xymatrix@C=4pc@R=3pc{
A \ar [r]^{u} \ar [d]_{f}& C \ar [d]^g\\
B \ar [r]_{v}& D
}
$$
is cellular if and only if the pushout
$$
\xymatrix@C=4pc@R=3pc{
D \ar [r]^{\coker g} \ar [d]_{\coker v}& C_0 \ar [d]^{\overline{v}}\\
B_0 \ar [r]_{\overline{g}}& E
}
$$
has $E\in\fct{\perp_\infty}{\cb}$.
\end{prop}
\begin{proof}
The first square above is cellular if and only if the unique morphism $t:P\to D$ from the pushout
$$
\xymatrix@C=4pc@R=3pc{
A \ar [r]^{u} \ar [d]_{f}& C \ar [d]^{f'}\\
B \ar [r]_{u'}& P
}
$$
is in $\cm$. It suffices to show that 
$$
\overline{v}\coker g=\coker t.
$$
We have 
$$
\overline{v}\coker g\cdot t\cdot f'= \overline{v}\coker g\cdot g=0
$$
and
$$
\overline{v}\coker g\cdot t\cdot u'= \overline{v}\coker g\cdot v
=\overline{g}\coker v\cdot v=0.
$$
Hence $\overline{v}\coker g\cdot t=0$.

Conversely, let $ht=0$ where $h:D\to X$. Then $hg=htf'=0$ and $hv=htu'=0$.
Thus there are unique $h':C_0\to X$ and $h'':B_0\to X$ such that $$
h'\coker g=h''\coker v=h.
$$
Thus there is a unique $p:E\to X$ such that $p\overline{v}=h'$ and
$p\overline g=h''$. Hence 
$$
p\overline{v}\coker g=h'\coker g=h.
$$
Since $\overline{v}\coker g$ is an epimorphism ($\overline{v}$ is an epimorphism because pushouts preserve epimorphisms), 
$\overline{v}\coker g=\coker t$.
\end{proof}
\appendix

\section{Canonicity of stable independence}

We prove here canonicity of stable independence without the hypothesis, present in \cite[9.1]{indep-categ-advances}, that all morphisms are monomorphisms. This does not depend on the rest of the paper. Our proof is a category-theoretic version of the argument in \cite{bgkv-apal} which shows somewhat more transparently what is going on there. The key notion is that of an independent sequence:

\begin{defin}
  Let $\ck$ be a category and let $\nf$ be an independence notion on $\ck$. Let $f : M_0 \to M$ be a morphism in $\ck$. An \emph{$\nf$-independent sequence for $f$} consists of a nonzero ordinal $\alpha$ and morphisms $(f_i)_{i \le \alpha}$ and $(g_{i, j})_{i \le j \le \alpha}$ such that for $i \le j \le k \le \alpha$:

  \begin{itemize}
  \item $f = f_0$ and $N_0=M$.
  \item $f_i : M \to N_i$ for $0<i$.
  \item $g_{i, j} : N_i \to N_j$.
  \item $g_{j,k} g_{i, j} = g_{i, k}$, $g_{i, i} = \id_{N_i}$.
  \item When $i < j$, the following square commutes and, when $j < \alpha$, is $\nf$-independent:

    $$  
    \xymatrix{
      M \ar[r]^{f_j} & N_j \\
      M_0 \ar[u]_{f_0} \ar[r]_{g_{0, i} f_0} & N_i \ar[u]_{g_{i, j}} \\
    }
    $$
  \end{itemize}

  We call $\alpha$ the \emph{length} of the sequence. For a regular cardinal $\lambda$, we say the independent sequence is \emph{$\lambda$-smooth} if whenever $\cf{i} \ge \lambda$, $N_i$ is the colimit of the system $(g_{j, k})_{j \le k < i}$. We say it is smooth if it is $\aleph_0$-smooth.
\end{defin}

For example, an independent sequence of length one for $f: M_0 \to M$ consists of $f_0 = f$, $f_1 : M \to N_1$, $g_{0, 1} : M = N_0 \to N_1$ such that $f_1 f_0 = g_{0, 1} f_0$. Since there are no independence requirements, it is essentially just the morphism $f_0$ (the additional data is only relevant when $\alpha$ is limit; we could have taken $N_1 = N_0 = M, f_1 = \id_{M}$). More interestingly, an independent sequence of length two consists essentially (because $N_0 = M$ and $g_{0,0} f_0 = f_0$) of an independent square:

    $$  
    \xymatrix{
      M \ar[r]^{f_1} & N_1 \\
      M_0 \ar[u]_{f_0} \ar[r]_{f_0} & M \ar[u]_{g_{0, 1}} \\
    }
    $$

    Thus it consists of two ``independent copies'' of $M$.

    An independent sequence of length three will look like:

    $$  
    \xymatrix{
      & N_2 & & \\
      & & N_1 \ar[ul]_{g_{1, 2}} & \\
      M  \ar[uur]_{f_2} & M \ar[ur]_{f_1} & & M \ar[ul]_{g_{0, 1}} \\
      & & M_0 \ar[ull]_{f_0} \ar[ul]_{f_0} \ar[ur]_{f_0} &
    }
    $$

    where the inner diamond $(M_0, M, M, N_1)$ and the outer diamond $(M_0, M, N_1, N_2)$ is independent (in fact, if $\nf$ is monotonic, all commutative subsquares of the diagram will be independent). Essentially, the leftmost ``copy'' of $M$ is independent of the two rightmost copies (in fact it is independent of $N_1$).

Existence allows us to build independent sequences. Recall that a category $\ck$ has \emph{chain bounds} if any chain has a compatible cocone. 
    
\begin{lem}\label{exist-indep}
  If $\ck$ has $\lambda$-directed colimits, chain bounds, and $\nf$ is a monotonic independence notion with existence, then for any morphism $f: M_0 \to M$ and any ordinal $\alpha$, there exists a $\lambda$-smooth independent sequence for $f$ of length $\alpha$. More generally, any independent sequence of length $\alpha_0 < \alpha$ extends to one of length $\alpha$ (in the natural sense).
\end{lem}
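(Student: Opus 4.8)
The plan is to build the sequence by transfinite recursion on the length, and in fact to prove the slightly stronger statement in which \emph{every} square of the sequence is required to be $\nf$-independent, including the topmost ones at $j = \alpha$. This costs nothing (the definition asks for independence only at $j < \alpha$, so such a sequence is in particular one we want) but it is what makes the recursion feed into itself. The "more generally" clause is handled by starting the recursion from the given sequence of length $\alpha_0$ instead of from length one. The base case is essentially trivial: a sequence of length $1$ is, up to the irrelevant colimit bookkeeping, just $f$ itself (take $N_0 = M = N_1$, $f_1 = \id_M$, $g_{0,1} = \id_M$), there are no independence constraints, and $\lambda$-smoothness is vacuous.

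For the successor step $\beta \to \beta+1$, given the sequence up to $N_\beta$ I would apply existence to the span $M \xleftarrow{f_0} M_0 \xrightarrow{g_{0,\beta}f_0} N_\beta$ to get an $\nf$-independent square with new vertex $N_{\beta+1}$, a morphism $f_{\beta+1} : M \to N_{\beta+1}$, and $g_{\beta,\beta+1} : N_\beta \to N_{\beta+1}$, and then set $g_{i,\beta+1} := g_{\beta,\beta+1}\, g_{i,\beta}$ for $i \le \beta$. The square at $(\beta,\beta+1)$ is independent by construction, and for $i < \beta$ the square at $(i,\beta+1)$ is obtained by pasting the square at $(i,\beta)$ --- independent by the inductive hypothesis --- with the one at $(\beta,\beta+1)$; here transitivity of $\nf$ (exactly as in the proof of Theorem \ref{weak-stable-thm}), together with monotonicity to reconcile the base and top-left corners, yields independence, while the cocycle identities for the $g_{i,j}$ are immediate.

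At a limit step $\delta$: if $\cf{\delta} \ge \lambda$ the chain $(N_i)_{i<\delta}$ is $\lambda$-directed, so I set $N_\delta := \colim_{i<\delta} N_i$, which is what $\lambda$-smoothness demands; if $\cf{\delta} < \lambda$ I use the chain bounds hypothesis to choose a compatible cocone $N_\delta$ on the chain. In both cases $f_\delta$ and the $g_{i,\delta}$ come from the (co)cone structure. To see that the square at $(i,\delta)$ is $\nf$-independent --- needed when $\delta < \alpha$, and also at the top by our strengthening --- one pushes the inductively independent squares at $(i,j)$, $i<j<\delta$, forward along the coprojections $N_j \to N_\delta$ using the defining invariance of an independence notion (Remark \ref{independence}(1)), again coupled with monotonicity; in the chain-bounds case one additionally applies existence to replace the bare cocone by an $\nf$-independent amalgam over it, which does not disturb the colimit required at stages of cofinality $\ge\lambda$, where the honest colimit is used instead. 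Truncating at length $\alpha$ gives the desired $\lambda$-smooth independent sequence.

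The hard part will be precisely this limit step: verifying that independence survives passage to colimits and to chain bounds. At stages of cofinality $\ge \lambda$ we are committed by $\lambda$-smoothness to the genuine colimit rather than to an independent amalgam, so we cannot merely reapply existence and must instead run the invariance-and-monotonicity argument directly on the colimit; at stages of smaller cofinality we begin only with a cocone carrying no a priori independence and must correct it by hand. The remaining work --- tracking functoriality and all the commuting triangles of the $g_{i,j}$ through the transfinite recursion --- is routine but fiddly.
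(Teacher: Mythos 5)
Your overall plan---transfinite recursion, invoking existence at each step---is what the paper's one-line proof (``by repeated use of existence'') intends, but several of your choices break the argument. First, transitivity of $\nf$ is not among the hypotheses: the lemma assumes only a \emph{monotonic} independence notion with existence, so the successor step cannot lean on transitivity. It also does not need to. The square $(\beta,\beta+1)$ produced by existence has base $g_{0,\beta}f_0=g_{i,\beta}(g_{0,i}f_0)$, so monotonicity alone (shrinking the bottom-right corner from $N_\beta$ to $N_i$) yields every square $(i,\beta+1)$ at once; the pasting you describe is not even a transitivity configuration, since the squares $(i,\beta)$ and $(\beta,\beta+1)$ share the \emph{left} edge $f_0$ rather than sitting side by side as in the proof of Theorem~\ref{weak-stable-thm}.

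The serious gap is the limit step, together with your ``free'' strengthening to independence at $j=\alpha$. The $f_i$ do \emph{not} form a cocone on the chain ($g_{i,j}f_i\neq f_j$ in general---each $f_j$ is a \emph{fresh} copy of $M$, which is the whole point of the sequence), so ``$f_\delta$ comes from the cocone structure'' defines nothing. Moreover, pushing the squares $(i,j)$ forward along $N_j\to N_\delta$ via the invariance axiom of Remark~\ref{independence}(1) produces squares whose top leg is $g_{j,\delta}f_j$, which \emph{varies with} $j$; it does not manufacture a single $f_\delta$ that is independent over every $N_i$. Worse, if $N_\delta$ is literally the colimit of the chain and $M$ is $\lambda$-presentable, then any $f_\delta:M\to N_\delta$ factors through some $N_{i_0}$, and invariance then shows the square $(i_0,\delta)$ is independent iff the degenerate square on $(M_0,M,N_{i_0},N_{i_0})$ with identity right leg is---which fails for every nontrivial notion. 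So your strengthened statement (independence at the top) is simply false when $\cf{\alpha}\geq\lambda$, and the definition's exemption of $j=\alpha$ is exactly what lets the sequence terminate in a bare colimit, which is how Lemma~\ref{canon-key-lem} uses it; erasing that exemption is what makes your recursion unable to close. At a limit $\delta<\alpha$ the mechanism for obtaining $f_\delta$ must again be \emph{existence}---applied to the span $M\leftarrow M_0\to P$ over a cocone $P$ on the chain (a chain bound, or the colimit when it exists), followed by monotonicity---not invariance; your proposal does this only when $\cf{\delta}<\lambda$ and explicitly declines to do it in the case where it is actually needed.
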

\begin{proof}
  By repeated use of existence.
\end{proof}

The following local character lemma will be handy:

\begin{lem}\label{loc-character}
  Let $\ck$ be a category, $\nf$ an independence relation such that $\ck_{\smallnf}$ is a $\lambda$-accessible category. Let $(M_i \to N_i)_{i < \lambda^+}$ be a system of $\lambda^+$-presentable objects in $\ck^2$ with colimit $M \to N$. Then there exists $i < \lambda^+$ such that the square

  $$  
  \xymatrix{
    N_i \ar[r] & N \\
    M_i \ar[u] \ar[r] & M \ar[u] \\
  }
  $$

  is independent.
\end{lem}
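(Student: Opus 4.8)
The plan is to exploit the $\lambda$-accessibility of $\ck_{\smallnf}$ together with the standard fact that $\lambda^+$-presentable objects in a $\lambda$-accessible category are precisely the $\lambda$-directed colimits (indeed, retracts of colimits of $\lambda$-indexed chains) of $\lambda$-presentable objects. First I would note that the object $M \to N$ of $\ck^2$ arises as the colimit of the $\lambda^+$-indexed smooth chain $(M_i \to N_i)_{i < \lambda^+}$, hence is $\lambda^+$-presentable in $\ck^2$. I want to realize this colimit \emph{inside} $\ck_{\smallnf}$. The subtlety is that the transition maps $(M_i \to N_i) \to (M_j \to N_j)$ are arbitrary commutative squares in $\ck^2$, not a priori independent squares, so the chain need not live in $\ck_{\smallnf}$ on the nose.

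The key step is therefore to massage the chain into one that does lie in $\ck_{\smallnf}$, or to directly invoke accessibility of $\ck_{\smallnf}$ to express $M \to N$ as a $\lambda$-directed colimit of $\lambda$-presentable objects of $\ck_{\smallnf}$, and then compare the two colimit presentations using the essential uniqueness of colimit cocones. Concretely: since $\ck_{\smallnf}$ is $\lambda$-accessible, write $M \to N = \colim_{k \in K} (P_k \to Q_k)$ for a $\lambda$-directed diagram of $\lambda$-presentable objects of $\ck_{\smallnf}$, with each cocone leg $(P_k \to Q_k) \to (M \to N)$ an independent square. Because each $M_i \to N_i$ is $\lambda^+$-presentable and $K$ is $\lambda$-directed (hence $\lambda^+$-directed up to cofinal restriction is false, but $\lambda$-directedness suffices to factor $\lambda$-presentables — here I need $\lambda^+$-presentability, so I should pass to the fact that $\lambda^+$-presentable objects are colimits of $\lambda$-chains of $\lambda$-presentables, combined with the canonicity/factorization arguments of \cite{adamek-rosicky}, 2.13 and 2.15). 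The upshot I am aiming for is a factorization, for some fixed large enough $i < \lambda^+$, of the leg $(M_i \to N_i) \to (M \to N)$ through one of the independent legs $(P_k \to Q_k) \to (M \to N)$, say via a map $(M_i \to N_i) \to (P_k \to Q_k)$ in $\ck^2$. Then the composite square $(M_i \to N_i) \to (P_k \to Q_k) \to (M \to N)$ equals the leg $(M_i \to N_i) \to (M \to N)$, and the second factor is independent; by the independence-notion axiom (Remark~\ref{independence}(1)), which lets us enlarge or shrink the top-right corner along a morphism, together with the closure properties of $\ck_{\smallnf}$ in $\ck^2$ (isomorphism-closure and left-cancellability, recorded after Theorem~\ref{weak-stable-thm}), I can conclude that the leg $(M_i \to N_i) \to (M \to N)$ is itself independent, which is exactly the square in the statement.

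I expect the main obstacle to be the bookkeeping around presentability ranks: matching up $\lambda^+$-presentable objects of $\ck^2$ with $\lambda$-presentable objects of $\ck_{\smallnf}$ and getting the factorization to land at a single index $i$ rather than merely cofinally. One clean way around this is to observe first that, by $\lambda$-accessibility of $\ck_{\smallnf}$ and Lemma~\ref{dir-colim-lem}-style closure, the chain $(M_i \to N_i)_{i<\lambda^+}$ may be assumed (after re-indexing on a club) to be a chain in $\ck_{\smallnf}$ itself — i.e.\ each square $(M_i \to N_i) \to (M_j \to N_j)$ is independent — since a $\lambda^+$-presentable object of $\ck_{\smallnf}$ is a $\lambda^+$-small $\lambda$-directed colimit of $\lambda$-presentables of $\ck_{\smallnf}$, and these can be strung into a smooth chain. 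Granting that, the cocone $((M_i \to N_i) \to (M \to N))_{i < \lambda^+}$ is a colimit cocone in $\ck_{\smallnf}$, so in particular every leg is an independent square, and we may take any $i$ at all; the lemma follows immediately. The only thing to verify carefully is that this re-indexing does not change the colimit, which is routine from cofinality of clubs and the fact that directed colimits are computed the same way in $\ck_{\smallnf}$ and in $\ck^2$ on the relevant diagrams.
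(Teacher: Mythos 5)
Your first route breaks down at its final step. From a factorization $(M_i \to N_i) \to (P_k \to Q_k) \to (M \to N)$ in $\ck^2$ in which only the \emph{second} square is known to be independent, you cannot conclude that the composite is independent. The axiom in Remark~\ref{independence} only permits replacing the top-right corner $D$ of a square $(A,B,C,D)$ along a morphism $D \to E$; it says nothing about precomposing an independent square with an arbitrary commutative square. The closure properties that are actually available point the other way: independent squares are closed under composition (both factors independent) and are left-cancellable (composite independent implies the \emph{first} factor is), so ``second factor independent'' yields nothing about the composite. Indeed, if precomposition with arbitrary squares preserved independence, then combining with left-cancellability and the independence of identity squares would force every square to be independent. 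Your fallback --- re-indexing on a club so that the given chain lies in $\ck_{\smallnf}$ --- is likewise unjustified: the transition squares $(M_i \to N_i) \to (M_j \to N_j)$ are arbitrary and need not be independent even cofinally, and assuming otherwise amounts to assuming a conclusion much stronger than the lemma (a club of independent legs, rather than a single one). A minor further slip: the colimit of a $\lambda^+$-indexed chain of $\lambda^+$-presentables is in general only $\lambda^{++}$-presentable, not $\lambda^+$-presentable, though this is not load-bearing.

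The missing idea --- and what the paper's proof does --- is an interleaving argument at a single limit index. After making the given chain $\lambda$-smooth, present $M \to N$ as a $\lambda^+$-directed colimit in $\ck_{\smallnf}$ of $\lambda^+$-presentable objects $(M'_j \to N'_j)_{j \in J}$, and build a zigzag $(i_\alpha, j_\alpha)_{\alpha<\lambda}$ alternating between the two systems, each leg factoring through the next; this uses $\lambda^+$-presentability of the objects and $\lambda^+$-directedness of both index sets. At $i := \sup_{\alpha<\lambda} i_\alpha$, smoothness forces $M_i \to N_i$ to coincide with the colimit of the $\lambda$-chain $(M'_{j_\alpha} \to N'_{j_\alpha})_{\alpha<\lambda}$, whose legs into $M \to N$ are independent by construction; closure of $\ck_{\smallnf}$ under $\lambda$-directed colimits then gives that the leg at this particular $i$ is independent. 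The independence of the leg is thus obtained as a $\lambda$-directed colimit of independent squares at one carefully chosen index of cofinality $\lambda$, not by transporting independence across a factorization.
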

\begin{proof}
  Write $I$ for $\lambda^+$ with the usual ordering. By taking colimits at ordinals of cofinality $\lambda$ and adding them to the system, we can assume without loss of generality that the system is $\lambda$-smooth: for any $i \in I$ of cofinality $\lambda$, $M_i$ is the colimit of $(M_{i_0})_{i_0 < i}$.

  Let $(M_j' \to N_j')_{j \in J}$ be a $\lambda^+$-directed system of $\lambda^+$-presentable objects whose colimit in $\ck_{\smallnf}$ is $M \to N$; we know that $\ck_{\smallnf}$ is $\lambda^+$-accessible. We build $(i_\alpha, j_\alpha)_{\alpha < \lambda}$ such that for all $\alpha < \lambda$:

  \begin{enumerate}
  \item $i_\alpha \in I$, $j_\alpha \in J$.
  \item $i_\alpha < i_{\alpha + 1}$.
  \item The map from $M_{i_\alpha} \to N_{i_\alpha}$ to $M \to N$ factors through $M_{j_\alpha}' \to N_{j_\alpha}'$.
  \item The map from $M_{j_\alpha}' \to N_{j_\alpha}'$ to $M \to N$ factors through $M_{i_{\alpha + 1}} \to N_{i_{\alpha + 1}}$.
  \end{enumerate}

  This is possible since $I$ and $J$ are $\lambda^+$-directed and $M_i \to N_i$, $M_j' \to N_j'$ are $\lambda^+$-presentable. Now, let 
  $i := \sup_{\alpha < \lambda} i_\alpha$. The colimit in $\ck^2$ of $(M_{i_\alpha} \to N_{i_\alpha})_{\alpha < \lambda}$ and 
  $(M_{j_\alpha}' \to N_{j_\alpha}')_{\alpha < \lambda}$ coincide and by $\lambda$-smoothness must be $M_i \to N_i$. By assumption, for all $\alpha < \lambda$, the square

  $$  
  \xymatrix{
    N_{j_\alpha}' \ar[r] & N \\
    M_{j_\alpha}' \ar[u] \ar[r] & M \ar[u] \\
  }
  $$

  is independent. Since $\ck_{\smallnf}$ has $\lambda$-directed colimits, this means that the square

    $$  
  \xymatrix{
    N_i \ar[r] & N \\
    M_i \ar[u] \ar[r] & M \ar[u] \\
  }
  $$

  is also independent.
\end{proof}

A much simpler result than the canonicity theorem is:

\begin{lem}\label{one-dir-lem}
  Assume $\ck$ is a category, $\nf^1$, $\nf^2$ are independence notions such that $\nf^1 \subseteq \nf^2$, $\nf^1$ has existence, and $\nf^2$ has uniqueness. Then $\nf^1 = \nf^2$.
\end{lem}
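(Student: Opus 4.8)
Since $\nf^1 \subseteq \nf^2$ is assumed, the only thing to prove is the reverse inclusion $\nf^2 \subseteq \nf^1$. So I would fix an $\nf^2$-independent commutative square with span $B \xleftarrow{f} A \xrightarrow{g} C$ and amalgam $D$ (call this square $\sigma$), and aim to show $\sigma$ is $\nf^1$-independent. The three ingredients — existence of $\nf^1$, the hypothesis $\nf^1 \subseteq \nf^2$, and uniqueness of $\nf^2$ — are each used exactly once, together with the defining ``top-corner'' closure axiom of an independence notion applied to $\nf^1$ twice.

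\textbf{Step 1 (produce a competing $\nf^1$-independent square over the same span).} Apply existence for $\nf^1$ to the span $f\colon A\to B$, $g\colon A\to C$: this yields morphisms $B\to D'$, $C\to D'$ forming an $\nf^1$-independent square $\sigma'$ over that span. By $\nf^1\subseteq\nf^2$, the square $\sigma'$ is also $\nf^2$-independent. Hence $\sigma$ and $\sigma'$ are two $\nf^2$-independent squares over the \emph{same} span $B\xleftarrow{f}A\xrightarrow{g}C$.

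\textbf{Step 2 (amalgamate via uniqueness of $\nf^2$).} Uniqueness for $\nf^2$ gives an object $E$ together with morphisms $r\colon D\to E$ and $r'\colon D'\to E$ whose composites with the two legs agree, i.e.\ $r\circ(B\to D)=r'\circ(B\to D')$ and $r\circ(C\to D)=r'\circ(C\to D')$. Consequently there is a \emph{single} commutative square $(A,B,C,E)$ which is simultaneously the push-forward of $\sigma$ along $r\colon D\to E$ and the push-forward of $\sigma'$ along $r'\colon D'\to E$.

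\textbf{Step 3 (two applications of the independence axiom for $\nf^1$).} Since $\sigma'$ is $\nf^1$-independent and $E$ sits above $D'$, the closure axiom of Remark~\ref{independence}(1) for $\nf^1$ gives that $(A,B,C,E)$ is $\nf^1$-independent. Since $E$ also sits above $D$, the same axiom (read in the other direction) forces $\sigma = (A,B,C,D)$ to be $\nf^1$-independent. As $\sigma$ was an arbitrary $\nf^2$-independent square, this proves $\nf^2\subseteq\nf^1$, hence $\nf^1=\nf^2$.

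\textbf{Expected main obstacle.} There is no deep obstacle — no accessibility, symmetry, transitivity, or local character is needed, which is exactly why this is far cheaper than the full canonicity theorem. The one point requiring care is purely bookkeeping: one must check that the amalgam provided by uniqueness of $\nf^2$ genuinely \emph{identifies} the two copies of $B$ (and of $C$) inside $E$, so that $(A,B,C,E)$ is an unambiguous commutative square; only then do the two invocations of the $\nf^1$-axiom refer to literally the same square over $E$.
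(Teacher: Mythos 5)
Your proposal is correct and follows essentially the same route as the paper's own proof: apply existence for $\nf^1$ to get a competing $\nf^1$-independent amalgam over the same span, use uniqueness for $\nf^2$ to make the two amalgams equivalent over a common cocone, and then transfer $\nf^1$-independence across that cocone via the defining biconditional of an independence notion. Your added bookkeeping remark about the two legs being genuinely identified in $E$ is exactly what the paper's word ``equivalent'' encodes.
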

\begin{proof}
  Given a square $M_0, M_1, M_2, M_3$ that is $\nf^2$-independent, use existence for $\nf^1$ to $\nf^1$-amalgamate the span $M_0 \to M_1$, $M_0 \to M_2$, giving maps $M_1 \to M_3'$, $M_2 \to M_3'$. Now by uniqueness for $\nf^2$, the amalgam involving $M_3$ and the one involving $M_3'$ must be equivalent, hence $M_0, M_1, M_2, M_3$ is also $\nf^1$-independent. 
\end{proof}

We can now prove the canonicity theorem. The idea is to use a generalization of the fact that, in a vector space, if $I$ is linearly independent and $a$ is a vector, there exists a finite subset $I_0 \subseteq I$ such that $(I - I_0) \cup \{a\}$ is independent. Thus we can remove a small subset of $I$ and get something independent. 

\begin{lem}\label{canon-key-lem}
  Assume $\ck$ has chain bounds, and $\nf^1$, $\nf^2$ are independence notions with existence such that:

  \begin{enumerate}
  \item $\nf^1$ is right monotonic.
  \item $\nf^2$ is transitive, left monotonic, and right accessible.
  \end{enumerate}

  Then any span has an amalgam that is both $\nf^1$-independent and $\nf^2$-independent. In particular, if $\nf^1$ has uniqueness then $\nf^1 \subseteq \nf^2$.
\end{lem}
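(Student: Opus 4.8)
The statement splits into a main assertion---every span admits an amalgam that is both $\nf^1$-independent and $\nf^2$-independent---and its ``in particular'' corollary, and the corollary is a short consequence of the main assertion. Suppose $(M_0,M_1,M_2,M_3)$ is $\nf^1$-independent and let $(M_0,M_1,M_2,M_3')$ be a common amalgam of the span $M_1\leftarrow M_0\to M_2$. Both squares are $\nf^1$-independent over the same span and $\nf^1$ has uniqueness, so there is an object $M_3''$ sitting under both $M_3$ and $M_3'$ compatibly with the span. Since $(M_0,M_1,M_2,M_3')$ is $\nf^2$-independent, post-composing with $M_3'\to M_3''$ keeps it $\nf^2$-independent, and then the defining property of an independence notion, applied to $M_3\to M_3''$, gives that $(M_0,M_1,M_2,M_3)$ is $\nf^2$-independent. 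So everything reduces to constructing common amalgams.

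To build a common amalgam of $M_1\leftarrow M_0\to M_2$, the plan is to pack $M_1$ together with a long, loosely arranged family of copies of $M_2$ over $M_0$ into a single object, in such a way that the $\nf^1$-independence of one of the resulting amalgams is automatic while its $\nf^2$-independence is forced by a local-character argument. First I would fix a regular uncountable $\lambda$, supplied by right accessibility of $\nf^2$ (which furnishes a local character statement for $\nf^2$ at $\lambda$ in the spirit of Lemma~\ref{loc-character}, as well as the colimits in $\ck$ needed below), chosen large enough that $M_0$, $M_1$ and $M_2$ are $\lambda$-presentable. Using existence of $\nf^1$ and chain bounds (compare Lemma~\ref{exist-indep}; right monotonicity of $\nf^1$ is all that the construction needs, with chain bounds handling the limit stages of small cofinality), I would build a $\lambda$-smooth $\nf^1$-independent sequence $(N_i)_{i\le\lambda^+}$ over $M_0$ whose successive steps are copies of $M_2$ over $M_0$, and set $N:=N_{\lambda^+}=\colim_{i<\lambda^+}N_i$. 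Then, again by existence, I would amalgamate $M_1$ against $N$ over $M_0$ inside some object $\widehat N$. For each $i<\lambda^+$, right monotonicity of $\nf^1$ (shrinking the copy of $N$ down to the $i$-th copy of $M_2$, through which the map from $M_0$ factors) together with the defining property of an independence notion shows that the square formed in $\widehat N$ by $M_1$ and that $i$-th copy of $M_2$ over $M_0$ is $\nf^1$-independent; so \emph{every} index already produces an $\nf^1$-independent amalgam of the span.

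It remains to locate an index at which the amalgam is also $\nf^2$-independent. Here I would exhibit a suitable subobject of $\widehat N$ as a $\lambda^+$-directed colimit of $\lambda^+$-presentable pieces---the piece at stage $i$ generated by $M_0$, $M_1$ and $N_i$, which is $\lambda^+$-presentable because it is generated by at most $\lambda$ many $\lambda$-presentable objects---and feed this into the local character lemma for $\nf^2$. This yields a bounded stage $i_0$ past which $M_1$ is $\nf^2$-independent, over the local base $N_{i_0}$, from the later stages of the sequence. Transitivity and left monotonicity of $\nf^2$, together with the way the copies of $M_2$ are arranged to sit independently over $M_0$, should then let me contract the base from $N_{i_0}$ down to $M_0$ and conclude that for some $i^*>i_0$ the amalgam of $M_1$ with the $i^*$-th copy of $M_2$ is $\nf^2$-independent over $M_0$; since that amalgam is already $\nf^1$-independent, it is the common amalgam we want.

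The hard part is exactly this last contraction of the base---making the two relations cohabit in one object. Existence lets us realize either relation ``in bulk'' against $N$, but not visibly both at once, so $\nf^2$-independence of a single amalgam has to be recovered after the fact from local character, and it comes out with the inflated base $N_{i_0}$ rather than $M_0$. Deflating it is what forces the interplay between transitivity of $\nf^2$ and a careful design of the long $\nf^1$-independent sequence (in particular, arranging that the copies of $M_2$ really are independent over $M_0$ in the precise sense needed to absorb the extra base material), and together with the presentability bookkeeping required to invoke Lemma~\ref{loc-character} this is where essentially all the care goes.
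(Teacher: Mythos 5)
Your overall architecture mirrors the paper's: a $\lambda^+$-indexed independent tower built over one leg of the span, a single amalgamation of the other leg against the colimit of the tower, Lemma~\ref{loc-character} to localize, and then transitivity plus monotonicity to contract the base. But there is a fatal role-reversal in your setup: you build the tower as an $\nf^1$-independent sequence, whereas it must be built as a $(\nf^2)^d$-independent sequence (using existence for $\nf^2$, via Lemma~\ref{exist-indep}). Notice that the $\nf^1$-independence of your tower does no work anywhere: as you yourself observe, the $\nf^1$-independence of each candidate square $(M_0, M_1, M_2^{(i)}, \widehat N)$ already follows from right monotonicity of $\nf^1$ applied to the single $\nf^1$-amalgamation, using only that $M_0 \to N$ factors through each copy of $M_2$ --- the mutual independence of the copies plays no role on that side.

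Where the independence of the tower is genuinely needed is exactly the step you flag as ``the hard part'': contracting the base from $N_{i_0}$ down to $M_0$. There you must compose, via transitivity of $\nf^2$, the local-character square $(N_{i_0}, P_{i_0}, N, \widehat N)$ with a square of the form $(M_0, N_{i_0}, M_2^{(j)}, N)$ sitting below it, and for $\nf^2$-transitivity to apply, that lower square must be $\nf^2$-independent. That is precisely the content of the tower being a $(\nf^2)^d$-independent sequence; your $\nf^1$-independent tower gives no such information, and you cannot deduce it, since $\nf^1 \subseteq \nf^2$ is what the lemma is ultimately meant to establish. The repair is to swap notions: build the tower with respect to $(\nf^2)^d$, keep the single amalgamation on the $\nf^1$ side, and recover $\nf^1$-independence of the final square from right monotonicity of $\nf^1$, and $\nf^2$-independence from local character, transitivity, and left monotonicity of $\nf^2$ --- which is the paper's proof. (Your opening paragraph, reducing the ``in particular'' clause to the main assertion, is correct and is essentially Lemma~\ref{one-dir-lem}.)
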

\begin{proof}
  Consider a span $M_0 \xrightarrow[f_0]{} M$, $M_0 \xrightarrow[f_0']{} M'$. Fix a regular cardinal $\lambda$ such that $\ck_{\smallnf^2}$ (the arrow category induced by $\nf^2$) is $\lambda$-accessible and $M_0, M, M', f_0, f_0'$ are $\lambda$-presentable in all relevant categories. 

  Using Lemma \ref{exist-indep}, build a $(\nf^2)^d$-independent sequence for $f_0$, $(f_i : M \to N_i)_{i \le \lambda^+}$, $(g_{i, j}: N_i \to N_j)_{i \le j \le \lambda^+}$, where $N_{\lambda^+}$ is the colimit of $(N_i)_{i < \lambda^+}$. Observe that
  $$
  f_{\lambda^+}f_0=g_{0,\lambda^+}f_0.
  $$
  Along the way, we ensure that $N_i$ is $\lambda^+$-presentable for $i < \lambda^+$. Now $\nf^1$-amalgamate the span 
  $M_0 \to N_{\lambda^+}$, $M_0 \to M'$, giving an $\nf^1$-independent square:

  $$
  \xymatrix{
    M' \ar[r]^{h'} & N_{\lambda^+}' \\
    M_0 \ar[u]^{f_0'} \ar[r]_{g_{0,\lambda^+}f_0} & N_{\lambda^+} \ar[u]_{h} \\  
  }
  $$

  with $N_{\lambda^+}'$ a $\lambda^{++}$-presentable object. Reworking the proof of \cite[Lemma 1]{rosicky-sat-jsl}---which requires directed colimits---to use the chain bounds available to us here, we can write $N_{\lambda^+}'$ as a colimit of $\lambda^+$-presentables $(g_{i,j}':N_i'\to N_{j}')_{i\leq j < \lambda^+}$, where:
  \begin{enumerate}\item There is an arrow $h_i:N_i \to N_i'$ for each $i < \lambda^+$.
  \item The $N_i'$ lie above $M'$, in the sense that $h':M'\to N_{\lambda^+}'$ factors as
  $$M'\stackrel{u_i}{\longrightarrow}N_i'\stackrel{g_{i,\lambda^+}'}{\longrightarrow}N_{\lambda^+}'$$ and, moreover, that the morphisms $h'f_0'=hg_{0,\lambda^+}f_0:M_0\to N_{\lambda^+}'$ factor identically through $g_{i,\lambda^+}'$, i.e.
  $$h_if_if_0=u_if_0'.$$
  Here we use $\lambda$-presentability of $M_0$, $M'$, and $\lambda^+$-directedness of the chain.
  \end{enumerate}
 Then $h$ is a colimit of the $h_i$ in $\ck^2$ and by Lemma \ref{loc-character}, there exists $i < \lambda^+$ such that the square

  $$
  \xymatrix{
    N_i' \ar[r]^{g_{i,\lambda^+}'} & N_{\lambda^+}' \\
    N_i \ar[u]^{h_i} \ar[r]_{g_{i,\lambda^+}} & N_{\lambda^+} \ar[u]_{h} \\  
  }
  $$
  
    is $\nf^2$-independent. By definition of an $(\nf^2)^d$-independent sequence, the square

  $$  
  \xymatrix{
    N_i \ar[r]^{g_{i,\lambda^+}} & N_{\lambda^+} \\
    M_0 \ar[u]^{f_if_0} \ar[r]_{f_0} & M \ar[u]_{f_{\lambda^+}} \\
  }
  $$

  is $\nf^2$-independent. By left transitivity, we obtain that the following is $\nf^2$-independent.

  $$  
  \xymatrix{
    N_i' \ar[r]^{g_{i,\lambda^+}'} & N_{\lambda^+}' \\
    M_0 \ar[u]^{h_if_if_0} \ar[r]_{f_0} & M \ar[u]_{hf_{\lambda^+}} \\
  }
  $$

  A chase through the diagrams above reveals that 
  $$
  g_{i,\lambda^+}'h_if_if_0=h'f_0'=hg_{0,\lambda^+}f_0=hf_{\lambda^+}f_0,
  $$ 
  meaning that the outer square and the large upper triangle in the following diagram commute:
  
    $$  
  \xymatrix{
    N_i' \ar[rr]^{g_{i,\lambda^+}'} &  & N_{\lambda^+}' \\
     & M'\ar[ur]_{h'}\ar[ul]^{u_i} & \\
    M_0 \ar[uu]^{h_if_if_0}\ar[ur]_{f_0'} \ar[rr]_{f_0} & & M \ar[uu]_{hf_{\lambda^+}} \\
  }
  $$
  
 Thus the square
 $$  
  \xymatrix{
    N_i' \ar[r]^{g_{i,\lambda^+}'} & N_{\lambda^+}' \\
    M_0 \ar[u]^{u_if'_0} \ar[r]_{f_0} & M \ar[u]_{hf_{\lambda^+}} \\
  }
  $$
is $\nf^2$-independent.
    
  By left monotonicity for $\nf^2$, then, the following is also $\nf^2$-independent:  

    $$  
  \xymatrix{
    M' \ar[r]^{h'} & N_{\lambda^+}' \\
    M_0 \ar[u]^{f_0'} \ar[r]_{f_0} & M \ar[u]_{hf_{\lambda^+}} \\
  }
  $$

  Note, however, that the morphism from $M$ to $N_{\lambda^+}'$ in the diagram above is not the same as the one in the $\nf^1$-amalgam of $M_0 \to N_{\lambda^+}$, $M_0 \to M'$. In fact, we have a diagram of the form:

      $$  
  \xymatrix{
    M' \ar[rr]^{h'} & & N_{\lambda^+}' \\
    M_0 \ar[u]^{f_0'} \ar[r]^{f_0} \ar[rd]_{f_0} & M \ar[r]^{g_0,\lambda^+} & N_{\lambda^+} \ar[u]_{h} \\
    & M \ar[ru]_{f_{\lambda^+}} & \\
  }
  $$
  
  where the upper rectangle is $\nf^1$-independent and the outer ``square'' $(f_0', f_0, h f_{\lambda^+}, h')$ is $\nf^2$-independent. By right monotonicity for $\nf^1$, we get that $(f_0', f_0, h f_{\lambda^+}, h')$ is also $\nf^1$-independent. Thus it is the desired amalgam of $f_0', f_0$. 
\end{proof}

\begin{thm}[The canonicity theorem]\label{canon-thm}
  Assume $\ck$ has chain bounds, and $\nf^1$, $\nf^2$ are independence notions with existence and uniqueness such that:

  \begin{enumerate}
  \item $\nf^1$ is right monotonic.
  \item $\nf^2$ is transitive and right accessible.
  \end{enumerate}

  Then $\nf^1 = \nf^2$. In particular, $\ck$ has at most one stable independence notion.
\end{thm}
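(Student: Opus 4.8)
The plan is to obtain Theorem~\ref{canon-thm} by simply chaining the two preparatory lemmas, since all the real work has already been carried out in Lemma~\ref{canon-key-lem} (and the local character and existence lemmas feeding it). First I would apply Lemma~\ref{canon-key-lem} to the pair $\nf^1, \nf^2$: the standing hypotheses of the theorem supply everything it requires — $\ck$ has chain bounds, both $\nf^1$ and $\nf^2$ have existence, $\nf^1$ is right monotonic, and $\nf^2$ is transitive and right accessible — while left monotonicity of $\nf^2$ is part of being an independence notion (or, in the stable case, is a consequence of symmetry together with monotonicity), so it is available at no cost. Since $\nf^1$ moreover has uniqueness, the ``in particular'' clause of Lemma~\ref{canon-key-lem} yields the inclusion $\nf^1 \subseteq \nf^2$.

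Next I would feed this inclusion into Lemma~\ref{one-dir-lem}, taking $\nf^1$ as the smaller relation: we have $\nf^1 \subseteq \nf^2$, $\nf^1$ has existence, and $\nf^2$ has uniqueness, so that lemma gives $\nf^1 = \nf^2$, which is exactly the conclusion of the theorem. For the final assertion, if $\nf^1$ and $\nf^2$ are both stable independence notions on $\ck$, then each is symmetric, transitive, accessible, and has existence and uniqueness; symmetry plus monotonicity gives right monotonicity, transitivity is immediate, and accessibility of $\ck_{\smallnf^2}$ entails right accessibility (in particular $\ck_{\smallnf^2}$ is $\lambda$-accessible for some $\lambda$, as used inside Lemma~\ref{canon-key-lem}). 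Hence the hypotheses of the theorem are satisfied and $\nf^1 = \nf^2$, so $\ck$ admits at most one stable independence notion.

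The proof of the theorem itself is therefore essentially just this bookkeeping, and the one point that deserves a moment's care — the closest thing to an obstacle at this stage — is to check that the \emph{asymmetric} hypotheses of Lemma~\ref{canon-key-lem} are met precisely as stated (right monotonicity is demanded of $\nf^1$, whereas transitivity, left monotonicity and right accessibility are demanded of $\nf^2$), since the two relations do not play symmetric roles there and one cannot simply swap them. Once that matching is verified, no further argument is needed.
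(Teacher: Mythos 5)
Your overall strategy is exactly the paper's: apply Lemma~\ref{canon-key-lem} to obtain $\nf^1 \subseteq \nf^2$, then feed that inclusion into Lemma~\ref{one-dir-lem} to get equality, and check the stable case for the final clause. There is, however, one genuine gap, and it sits precisely at the point you yourself single out as needing care. Lemma~\ref{canon-key-lem} demands that $\nf^2$ be \emph{left monotonic}, and this is \emph{not} part of being an independence notion: by Remark~\ref{independence}(1), the definition only builds in invariance of independence under changing the amalgam corner along a morphism $D \to E$, whereas left monotonicity is the ability to shrink the left-hand object of the square along a factorization. It is listed as a separate hypothesis in Lemma~\ref{canon-key-lem} exactly because it does not come for free from the definition. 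Your fallback (``in the stable case, a consequence of symmetry together with monotonicity'') does not rescue the step either: the theorem does not assume $\nf^2$ is symmetric or stable, and invoking monotonicity to establish monotonicity is circular.

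The needed monotonicity of $\nf^2$ does hold, but it must be \emph{derived}: it follows from existence, uniqueness, and transitivity of $\nf^2$ --- all of which are among the theorem's hypotheses --- by \cite[3.20]{indep-categ-advances}. This is precisely the one-line remark the paper appends to its otherwise identical proof (``combine Lemmas~\ref{one-dir-lem} and \ref{canon-key-lem}''), and the same citation is what legitimizes the stable case as well. With that derivation substituted for your claim that left monotonicity is definitional, your argument goes through and coincides with the paper's.
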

\begin{proof}
  Combine Lemmas \ref{one-dir-lem} and \ref{canon-key-lem}. Note that right monotonicity for $\nf^2$ follows from existence, uniqueness, and transitivity \cite[3.20]{indep-categ-advances}.
\end{proof}

\begin{cor}
  Assume $\ck$ has chain bounds. If $\nf$ is a transitive and right accessible independence notion with existence and uniqueness, then $\nf$ is a stable independence notion. In particular, it is symmetric.
\end{cor}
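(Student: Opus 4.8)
Per Remark~\ref{independence}(2), calling $\nf$ \emph{stable} means it is symmetric, transitive, accessible, and has existence and uniqueness. Three of these --- transitivity, existence, uniqueness --- are hypotheses, so the real content is \emph{symmetry}, together with upgrading ``right accessible'' to ``accessible''. The plan is to obtain symmetry as an instance of the canonicity theorem (Theorem~\ref{canon-thm}).

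I would apply Theorem~\ref{canon-thm} with $\nf^1 := \nf^d$, the transpose of $\nf$ (the class of squares $(A,B,C,D)$ such that $(A,C,B,D) \in \nf$), and $\nf^2 := \nf$. First one checks that $\nf^d$ is again an independence notion: the defining closure axiom of Remark~\ref{independence}(1) speaks of extending the common codomain of a square along a morphism and is manifestly invariant under reflecting squares across their diagonal, since that reflection merely relabels the two legs of the span. For the same reason $\nf^d$ inherits existence, uniqueness, and transitivity from $\nf$ --- each of these refers to spans and their amalgams and is unaffected by swapping the legs (here one uses that transitivity is meant two-sidedly, so that it is stable under transposition). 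By \cite[3.20]{indep-categ-advances}, which derives right monotonicity from existence, uniqueness, and transitivity, applied to $\nf^d$, the notion $\nf^d$ is right monotonic. Since $\ck$ has chain bounds by hypothesis, all the hypotheses of Theorem~\ref{canon-thm} now hold ($\nf^1 = \nf^d$ right monotonic, with existence and uniqueness; $\nf^2 = \nf$ transitive and right accessible, with existence and uniqueness), and it yields $\nf^d = \nf$. But $\nf^d = \nf$ is precisely the assertion that $\nf$ is symmetric.

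It remains to observe that $\nf$ is accessible, i.e.\ that $\ck_\downarrow$ is an accessible category. This is where one unpacks the right-accessibility hypothesis: it provides $\lambda$-continuity of $\nf$ (closure of $\ck_\downarrow$ under $\lambda$-directed colimits in $\ck^2$) together with the relevant right-hand local-character/witness data (compare Lemma~\ref{loc-character}), and now that $\nf$ has been shown symmetric the left-hand and right-hand versions of that data coincide; feeding both into the criterion for accessibility of $\ck_\downarrow$ gives the conclusion. Assembling the pieces: $\nf$ is symmetric (just shown), transitive and has existence and uniqueness (hypotheses), and is accessible (just argued), hence is a stable independence notion; the ``in particular'' clause is exactly the symmetry established above. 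The novelty, compared with \cite{indep-categ-advances}, is that symmetry need not be assumed --- the weaker package of hypotheses forces it, via canonicity without the monomorphism requirement.

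The step I expect to be the main obstacle is verifying that $\nf^d$ genuinely satisfies the hypotheses on $\nf^1$ in Theorem~\ref{canon-thm} --- concretely, that right monotonicity of $\nf^d$ (equivalently, left monotonicity of $\nf$) is available. This hinges on transitivity being read in its two-sided form, so that it transposes to transitivity of $\nf^d$ and can be fed into \cite[3.20]{indep-categ-advances}; if one's ambient definition of ``transitive'' were one-sided, this is where an extra argument (or a strengthening of the hypothesis) would be needed. The remaining bookkeeping --- that $\nf^d$ is an independence notion, that existence and uniqueness transpose, and the left/right matching for accessibility --- is routine once symmetry is in hand, and no use of chain bounds beyond its role inside Theorem~\ref{canon-thm} is required.
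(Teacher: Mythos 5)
Your proof is correct and follows essentially the same route as the paper: both reduce the corollary to showing $\nf = \nf^d$ by applying the canonicity theorem (Theorem~\ref{canon-thm}) to the pair $\nf$, $\nf^d$. The only real difference is the assignment of roles --- you take $\nf^1 = \nf^d$ and $\nf^2 = \nf$, whereas the paper takes $\nf^1 = \nf$ and $\nf^2 = \nf^d$; your choice has the mild advantage that the condition ``$\nf^2$ is transitive and right accessible'' is then literally the hypothesis of the corollary, at the cost of having to obtain right monotonicity of $\nf^d$ (i.e.\ left monotonicity of $\nf$) from the transposed form of \cite[3.20]{indep-categ-advances}, which is exactly the transposition-of-transitivity point you correctly flag; the paper's assignment trades this for needing $\nf^d$ to be transitive and right accessible, so neither version escapes checking that the relevant properties dualize. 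Your closing paragraph on upgrading ``right accessible'' to ``accessible'' is admittedly sketchy, but the paper's own proof does not address that step at all, so there is no substantive divergence there either.
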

\begin{proof}
  It suffices to see that $\nf = \nf^d$. For this, apply Theorem \ref{canon-thm} with $\nf^1 = \nf$ and $\nf^2 = \nf^d$ (again, $\nf^2$ is right monotonic by \cite[3.20]{indep-categ-advances}).
\end{proof}
\begin{remark}
  Instead of chain bounds, it suffices to be able to build the appropriate independent sequences. See \cite[9.6]{indep-categ-advances}.
\end{remark}

\bibliographystyle{amsalpha}
\bibliography{more-indep}

\end{document}